\newcommand{\Coker}{\mathbf{c}}
 \newcommand{\CC}{\mathcal{C}}

 \newcommand{\uparameter}{u}
 \newcommand{\rparameter}{r}
 \newcommand{\Cr}{\mathcal{C}_\uparameter}

\newcommand{\doublematrix}[4]{
\begin{mat}{ccc|ccc}
&&&&&\\
&#1&&&#2&\\
&&&&&\\ \hline
&&&&&\\
&#3&&&#4&\\
&&&&&
\end{mat}
}

\newcommand{\onedoublematrix}[3]{
\begin{mat}{ccc|ccc}
&&&&&\\
&&&&#2&\\
&#1&&&&\\ \cline{4-6}
&&&&&\\
&&&&#3&\\
&&&&&
\end{mat}
}

\newcommand{\halfrandommatrix}[1]{
\begin{mat}{ccc|ccc}
&&&*&\hdots&*\\
&&&\vdots&\ddots&\vdots\\
&#1&&*&\hdots&*\\ \cline{4-6}
&&&*&\hdots&*\\
&&&\vdots&\ddots&\vdots\\
&&&*&\hdots&*
\end{mat}
}

\newcommand{\randommatrix}{
\begin{mat}{ccc}
*&\hdots&*\\
\vdots&\ddots&\vdots \\
*&\hdots&*\\ \hline
*&\hdots&*\\
\vdots&\ddots&\vdots \\
*&\hdots&*
\end{mat}
}

\newcommand{\triplematrix}[9]{
\begin{mat}{ccc|ccc|ccc}
&&&&&&&&\\
&#1&&&#2&&&#3&\\
&&&&&&&&\\ \hline
&&&&&&&&\\
&#4&&&#5&&&#6&\\
&&&&&&&& \\ \hline
&&&&&&&&\\
&#7&&&#8&&&#9&\\
&&&&&&&& \\ 
\end{mat}
}

\title{Markov chains arising in the study of random matrices over pro-finite rings}

\documentclass{article}

\usepackage{ifxetex,ifluatex}
\if\ifxetex T\else\ifluatex T\else F\fi\fi T%
  \usepackage{fontspec}
\else
  \usepackage[T1]{fontenc}

  \usepackage{blkarray, bigstrut}
  \usepackage[utf8]{inputenc}
  
  \usepackage{amsthm}
  \usepackage{thmtools}
  \usepackage{mathtools}

  \usepackage{lmodern}
  \usepackage{amssymb}
  \usepackage{amsfonts}
  \usepackage{tikz-cd}
  \usepackage{amsthm}
  \usepackage{xfrac}
  \usepackage{mathtools}
  \usepackage{multirow}
  \usepackage{mathrsfs}
  \usepackage{comment}

  \newtheorem*{corollary}{Corollary}

  \newcommand{\probP}{\text{I\kern-0.15em P}}
  \newcommand{\probE}{\text{I\kern-0.15em E}}
  \newcommand{\coker}{\text{coker}}
  \newcommand{\defeq}{\overset{\mathrm{def}}{=\joinrel=}}

  \numberwithin{equation}{section}

  \theoremstyle{remark}
  \newtheorem*{remark}{Remark}
  \theoremstyle{remark}

  \newtheorem*{observation}{Observation}
  \newtheorem*{definition}{Definition}

  \newcommand{\Z}{\mathbb{Z}}

  \excludecomment{mysection}
  \excludecomment{mymysection}
  \excludecomment{maybeinclude}

    \newenvironment{mat}
    {
    \left[
\begin{array}}
{ \end{array}
\right]
}

\newcommand{\probM}{\mathcal{M}}

\newtheorem{Remark}{Remark}[section]

\usepackage{mdframed}

\theoremstyle{theorem}
\newtheorem{claim}{Claim}

\newtheorem{theorem}{Theorem}

\newtheorem*{theorem*}{Theorem}
\newtheorem{lemma}{Lemma}[theorem]
\newtheorem{sublemma}{Lemma}[lemma]
\newenvironment{ftheo*}
  {\begin{mdframed}\begin{theorem*}}
  {\end{theorem*}\end{mdframed}}

\newtheorem{Corollary}{Corollary}[theorem]

\newtheorem*{notation}{Notation}
\newtheorem*{goal}{Goal}

  \usepackage{enumitem}

   \DeclareSymbolFont{bbold}{U}{bbold}{m}{n}
   \DeclareSymbolFontAlphabet{\mathbbold}{bbold}

   \theoremstyle{remark}

\fi

 \theoremstyle{definition}

 \newcommand{\comm}[1]{} 

 \newcommand{\prob}{\probP}

 \usepackage{cite}
 \newcommand{\commm}[1]{}
 
 \newcommand{\chapter}{section}

  \usepackage{
  hyperref,
  cleveref,
  }
\author{Nikita Lvov\footnote{nikita.lvov@mail.mcgill.ca}
} 
\begin{document} 
\maketitle

\begin{abstract}
Recent work of the author investigates certain random processes, valued in abelian $p$-groups, that naturally arise in the study of Haar random matrices over $\Z_p$. It was found, somewhat surprisingly, that these processes are reversible Markov chains. In this short note, we give a simple alternative derivation of this fact. The new derivation also proves that this phenomenon is not specific to $\Z_p$, but generalizes to Haar random matrices over any profinite local ring.
\end{abstract}

\section{Introduction}

\begin{mysection}
Let $M$ be a matrix over a finite field $k$. Let $\probM$ be the random matrix obtained from $M$ by adding a column vector whose entries are independent, uniformly distributed random variables in $k$:
\[
\probM \defeq 
\begin{mat}{ccc|c}
&&&*\\
&M&&\vdots\\
&&&*
\end{mat}
\]
\end{mysection}

\begin{mysection}
Suppose that $M$ is a square matrix over a finite field. How does the corank of $M$ change, when we add a random column vector and a random row vector to $M$? The corank either increases by $1$, decreases by $1$ or 
\end{mysection}

Perhaps, the first and simplest instance of a Markov chain appearing in the study of random matrices, is the evolution of the corank of the principal minors of a large random matrix over a finite field.

\paragraph{Random matrices over finite fields} Let $k$ be a finite field. Consider an infinite matrix $\probM$, whose entries are indexed by $\mathbb{N} \times \mathbb{N}$ and are independent \textit{uniformly distributed} random variables in $k$. Let $\probM_{n,n}$ be the top-left $n \times n$ corner of $\probM$. Then
\begin{equation}
\label{eqn:process}
corank(\probM_{n,n})
\end{equation} 
is a random process valued in $\mathbb{N}$. Using standard arguments in linear algebra, one can deduce the following interesting features of this process:
\begin{itemize}
\item (\ref{eqn:process}) is a Markov chain, i.e. a random walk on $\mathbb{N}$ (with certain explicit transition propabilities, that depend on the cardinality of $k$.) 
\item This Markov chain is recurrent, and hence converges to its unique stationary distribution.
\end{itemize}

Finally, we remark that this Markov chain is also \textit{reversible}, for trivial reasons, since every recurrent Markov chain on $\mathbb{N}$, for which $p(i \rightarrow j) \neq 0$ if $|i-j|=1$ and $p(i \rightarrow j) = 0$ if $|i-j|\geq2$, is reversible by the Kolmogorov cycle condition.

\paragraph{Random matrices over $\Z_p$}
A non-trivial generalization of the above facts to matrices over $\Z_p$ was studied in \cite{arxiv}.

\begin{theorem*} \cite[Theorem 1.1]{arxiv}
\begin{equation}
\label{eqn:cloperator}
coker \Big( Haar_{n,n}(\Z_p) \Big) 
\end{equation} is a \textit{reversible} Markov chain on abelian $p$-groups.
\end{theorem*}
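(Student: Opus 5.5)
Let $\probM$ be an infinite matrix over $\Z_p$ with i.i.d. Haar entries, and let $\probM_{n,n}$ be its top-left $n\times n$ corner. I will prove the Markov property and reversibility at the same time. The idea is to exhibit a conserved weight $\pi$ on isomorphism classes of finite abelian $p$-groups such that
\[
\pi(G)\, \prob\big(G \to H\big) = \pi(H)\, \prob\big(H \to G\big),
\]
where $\prob(G\to H)$ is the probability that $\coker(\probM_{n+1,n+1}) \cong H$ given $\probM_{n,n}$. The one-step transition is the same as adding a Haar random column $c$, a Haar random row $r$ and a Haar random corner entry $d$ to the block $\probM_{n,n}$.

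\emph{Markov property.} First show that the conditional law of $\coker(\probM_{n+1,n+1})$ given $\probM_{n,n}$ depends only on $G=\coker(\probM_{n,n})$. Write $\probM_{n,n}=UDV$ (Smith normal form) with $U,V\in GL_n(\Z_p)$. Since Haar measure on $\Z_p^n$ is invariant under $GL_n(\Z_p)$, the transformed row and column $rV^{-1}$ and $U^{-1}c$ are still independent and Haar. So we may assume $\probM_{n,n}=D$ is diagonal. The new cokernel is then the quotient of $G\oplus \Z_p$ by the image of the extended matrix. Concretely, it is the cokernel of the map $\Z_p \to G\oplus\Z_p$ sending $1\mapsto(\bar c,d)$, further reduced by the relation coming from the row $r$. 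The image $\bar c\in G$ of a Haar column is uniform in $G$, and the row $r$ acts through a uniformly random homomorphism $G\to \mathbb{Q}_p/\Z_p$ after dualizing. Care is needed when $D$ has nonzero entries, since the free part matters. For $n$ large, however, only torsion $G$ occurs almost surely, because $\det \probM_{n,n}\neq 0$ with probability $1$. This gives transition probabilities that are intrinsic to $G$: add a uniform element and a uniform functional, together with an independent $d$.

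\emph{Reversibility.} Next, compute the transition kernel in a symmetric form. I expect
\[
\prob(G\to H) = \frac{\#\{\text{"elementary correspondences" between } G \text{ and } H\}}{|G|\cdot(\text{normalizing factor})},
\]
where an elementary correspondence is an intermediate group $E$ with a surjection $E\to G$ with cyclic kernel and a surjection $E\to H$ with cyclic kernel. This is like a Hecke correspondence: adding a column passes from $G$ to a quotient by a cyclic subgroup, and adding a row passes to an extension by a cyclic group. Then I would choose $\pi(G)$ proportional to $1/|\mathrm{Aut}(G)|$, i.e. the Cohen--Lenstra weight, possibly times a power of $|G|$. The goal is to make $\pi(G)\prob(G\to H)$ a count of isomorphism classes of the data $(E, E\twoheadrightarrow G, E\twoheadrightarrow H)$ divided by automorphisms, which is manifestly symmetric in $G$ and $H$. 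Counting the data in a groupoid removes the automorphism factors.

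\emph{Main obstacle.} The hardest step is getting the symmetric groupoid description exactly right. The row and column operations are not symmetric on the nose: one is a quotient and the other is an extension. I would therefore use Pontryagin duality, $\coker(A^T)\cong \coker(A)^\vee$ noncanonically isomorphic to $\coker(A)$, to swap the roles of the row and the column. This reduces the symmetry of the weighted kernel to the symmetry of transposition applied to the whole $(n+1)\times(n+1)$ matrix. Equivalently, the pair $(\probM_{n,n},\probM_{n+1,n+1})$ has the same joint law as a reversed pair, obtained by conditioning on $\probM_{n+1,n+1}$ and deleting a row and a column. Verifying that the deletion law matches the addition law after weighting by $\pi$ is where I expect the real computation, namely a Haar-measure change of variables on $GL_{n+1}(\Z_p)$.
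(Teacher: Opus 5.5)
Your Markov-property argument has the same shape as the paper's: $GL$-invariance of Haar measure (\autoref{lem:ginvariance}) plus stabilization (\autoref{lem:deltar}). One sentence in it is wrong, though. The row $r$ does not impose a further relation on $(G\oplus\Z_p)/\langle(\bar c,d)\rangle$; it changes the existing relations. For example, $\left[\begin{smallmatrix} p&0\\ 1&p\end{smallmatrix}\right]$ has cokernel $\Z/p^2$, which is not a quotient of $(\Z/p\oplus\Z_p)/\langle(0,p)\rangle\cong(\Z/p)^2$. The correct picture is that a new row produces an extension by a cyclic group, and a new column a quotient by a uniform element.

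The genuine gap is reversibility, where nothing is proved. The kernel formula is only ``expected'' and $\pi$ is guessed. Detailed balance is never checked. The mechanism you propose also cannot supply it. Transposition together with $\coker(A^T)\cong\coker(A)$ only swaps the roles of rows and columns, relating the operators on $\CC_u$ to those on $\CC_{-u}$. It never exchanges the times $n$ and $n+1$, which is what reversibility requires. Your statement that $(\probM_{n,n},\probM_{n+1,n+1})$ has the same law as the pair obtained by conditioning on $\probM_{n+1,n+1}$ and deleting is a tautology. The actual content is that the deletion kernel equals the addition kernel, and you leave exactly that as ``the real computation''.

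The missing idea, which makes computation unnecessary, is \autoref{lem:rev}. Factor the step through $\CC_1$: add a column, then a row.
\begin{itemize}
\item Conditioned on its class, $\probM_{n+1,n+1}$ is still left $GL_{n+1}(\Z_p)$-invariant.
\item Deleting its last row gives the same class in $\CC_1$ as $[\probM_{n+1,n+1}\mid e_{n+1}]$.
\item This in turn has the same class as $[g\probM_{n+1,n+1}\mid ge_{n+1}]$ for an independent Haar $g$. By invariance, $g\probM_{n+1,n+1}$ has the same conditional law and is independent of $g$.
\item By \autoref{lem:technical}, $ge_{n+1}$ is Haar on the single orbit of vectors nonzero mod $p$. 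It is therefore within total variation $p^{-(n+1)}$ of a Haar column.
\end{itemize}
Hence $\prob(\Coker\probM_{n,n+1}=A\mid \Coker\probM_{n+1,n+1}=B)=d_{1,0}(B\to A)+O(p^{-n})$. Write the joint law of $(\Coker\probM_{n,n+1},\Coker\probM_{n+1,n+1})$ both ways. Let $n\to\infty$, using positive recurrence, i.e.\ convergence of the marginals to $\mu_1$ and $\mu_0$. This gives $\mu_1(A)\,d_{0,1}(A\to B)=\mu_0(B)\,d_{1,0}(B\to A)$. Consequently $\Delta_0=d_{0,1}d_{1,0}$ is reversible with respect to $\mu_0$. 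That measure is Cohen--Lenstra, so your guess for $\pi$ is right.

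Alternatively, your counting route could be completed as in \cite{arxiv}. That would mean actually computing the kernel and verifying detailed balance against $1/|\mathrm{Aut}(G)|$, but that computation is precisely what your proposal omits.
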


\begin{remark}
The process (\ref{eqn:cloperator}) is a direct generalization of the process (\ref{eqn:process}). Indeed, the cokernel is a direct generalization of the corank, as for a matrix $M$ over a finite field $k$, \[ coker(M) \cong k^{corank(M)}. \] 
\end{remark}

In fact, in \cite[Theorem 1.1]{arxiv}, we describe the generator of this Markov chain explicitly. In its most elementary form, the Markov chain (\ref{eqn:cloperator}) is generated by the following operation:
\begin{itemize}
\item[(A)] Given an abelian $p$-group $G$, mod out by a uniformly random element of $G$.
\item[(B)] Take the product of $\Z_p$.\footnote{Steps (A) and (B) together are equivalent to the following single step - pick a uniformly random extension of $G$ by $\Z_p$. The equivalence is a consequence of \cite[Lemma 1.3]{arxiv}.}
\item[(C)] Finally, mod out by a Haar random element of this product.
\end{itemize}
Futhermore, in \cite[Lemma 2.3]{arxiv} we construct an explicit weighted graph $\Gamma$, such that the Markov chain is realized as a symmetric random walk on $\Gamma$. Finally, the last chapter of \cite[Section 3]{arxiv}, together with \cite{VanPeskiLvov}, gives a complete explicit solution to the spectral problem for this Markov chain.

\paragraph{Main results of this paper: random matrices over pro-finite local rings and a unified approach}

The theorem \cite[Theorem 1.1]{arxiv} is unsatisfactory in two respects:
\begin{itemize}
\item It is not a priori clear why the Markov property and the reversibility property hold. 
\begin{itemize}
\item There are related Markov chains that arise in the study of symmetric and anti-symmetric matrices over $\Z_p$. These Markov chains are also turn out to reversible. In each case, this is shown by a computation. However, the  seeming ubiquity of this phenomenon begs for a unified approach.
\end{itemize}
\item We can ask what happens for higher-dimensional rings. The approach of \cite{arxiv} does not directly generalize to this case.
\begin{itemize}
\item Aside from intrinsic interest, such a gneralization would be desirable, as $2$-dimensional rings (e.g. $\Z_p[x]$) arise even in the study of $p$-adic random matrices. For example, to understand the characterstic polynomial of a $p$-adic matrix $\probM$, one is led to consider $det(\probM-Ix)$, which is most naturally understood through $coker(\probM-Ix)$, the cokernel of a random matrix over $\Z_p[x]$.\footnote{Of course, this matrix is not Haar random, but many classes of random matrices can be proven to behave asymptotically in the same way as Haar random matrices. This is the universality phenomenon, that is discussed in particular in \cite[Section 3]{WoodICM}.}
\end{itemize}
\end{itemize}

In this paper we prove the following theorem.

\begin{theorem}
\label{thm:maintheorem}
Let $R$ be a finite or profinite local ring. Then $coker(Haar_{n,n+u}(R))$ is a reversible Markov chain, for any $u \in \Z$.
\end{theorem}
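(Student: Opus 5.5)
We first fix the setup. Let $\probM$ be an infinite matrix over $R$ with independent Haar-distributed entries, and let $\probM_n$ denote its top-left $n\times(n+u)$ corner, where the indexing is shifted so that all sizes are nonnegative. The process is $X_n = coker(\probM_n)$, a finitely presented $R$-module. Passing from $\probM_n$ to $\probM_{n+1}$ adds one new row and one new column. The plan is to exhibit both the Markov property and reversibility through a single symmetric structure: a joint law on pairs of consecutive cokernels that is manifestly invariant under swapping the two coordinates.

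\textbf{Step 1: the Markov property.} We decompose the step into two moves: adding a Haar random column, then a Haar random row. Adding a column to a matrix $A$ replaces $coker(A)$ by $coker(A)/\langle v\rangle$, where $v$ is the image of a Haar random vector of $R^{n}$. Since the projection $R^n\to coker(A)$ pushes Haar measure to Haar measure on the compact module $coker(A)$, the new cokernel is $G/\langle g\rangle$ with $g$ Haar in $G=coker(A)$. Adding a row amounts to dualizing. Equivalently, $coker$ of the enlarged matrix is the pushout obtained by extending $G$ by $R$ (the new basis vector) and quotienting by the new column, whose top part is the old $v$. This realizes steps (A), (B), (C) in module-theoretic form. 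In each move the law of the new cokernel depends only on the isomorphism class of the old one, because Haar measure is invariant under $Aut$. This gives the Markov property, with a transition kernel $P(G\to H)$ expressed purely in terms of $G$ and $H$.

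\textbf{Step 2: reversibility.} We would aim to prove detailed balance $\pi(G)P(G\to H)=\pi(H)P(H\to G)$, with $\pi(G)\propto |Aut(G)|^{-1}|G|^{-u}$ (Cohen--Lenstra type weights). Rather than computing, we consider the finite quotient $R/\mathfrak m^k$, or any finite quotient $R'$ through which the relevant modules factor; the profinite case then follows by a limiting argument since cokernels are finite with probability one whenever they are, up to the $R$-rank forced by $u$. Over a finite ring, count the pairs (matrix $A$ of size $n\times(n+u)$, extension $A'$ of size $(n+1)\times(n+1+u)$). The count of $A'$ whose top-left corner is $A$ with $coker(A)\cong G$ and $coker(A')\cong H$ equals $N_n(G)\,P(G\to H)\cdot|R|^{2n+1+u}$, where $N_n(G)$ is the number of matrices with cokernel $G$. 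The key symmetry comes from transposition combined with a row/column operation argument. Using $GL_{n+1}\times GL_{n+1+u}$ invariance, the joint law of $(coker(A'),\ coker(\text{corner}))$ equals the joint law of $(coker(A'),\ coker(A'\text{ with a random row and column deleted}))$. Deleting a generic row and column is the time reversal. We then identify the time-reversed kernel, from $H$ to $G$, with the forward kernel up to the ratio $N_n/N_{n+1}$. Here the fact that deleting a row and column is "dual" to adding one is encoded by $coker(A')$ versus $coker(A'^{T})$, or by Pontryagin/Matlis duality $G\mapsto G^\vee$. Since $G\cong G^\vee$ non-canonically for finite modules over a local ring only in special cases, we would handle this by proving balance for the kernel written in terms of counts $\#\{\text{surjections}\}$, which are symmetric.

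\textbf{Main obstacle.} The hardest step is turning this symmetry into an honest statement about isomorphism classes over a general (possibly non-Gorenstein, noncommutative-free but non-principal) local ring, where cokernels need not be self-dual. The first approach is to write $P(G\to H)$ as $\#\{(x,y)\}$ counted over a "middle" module $E$ that surjects onto $H$ and contains $G$-data, namely the cokernel after adding only the column. Both $G$ and $H$ are then obtained from $E$ by symmetric operations: quotient by an element, and taking the kernel of a functional or a cyclic sub-extension. This would give a weighted graph $\Gamma$ with symmetric edge weights $w(G,H)=\sum_E c(G,E)c(H,E)$, generalizing \cite[Lemma 2.3]{arxiv}. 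Checking that the two half-steps have matching weights after the normalization by $|Aut|$ is where the real work lies.
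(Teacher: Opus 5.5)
Your Step~2 does not actually prove reversibility. The one sentence that would do so is ``we then identify the time-reversed kernel, from $H$ to $G$, with the forward kernel'', and it is asserted without a mechanism. Noting that the reversal of ``add a row and a column'' is ``delete a row and a column'' is correct but contentless. Relating the reversed kernel to the forward one through $N_n/N_{n+1}$ is just Bayes' formula, which holds for every Markov chain. What must be shown is that, on classes, deleting is asymptotically the \emph{same} random operation as adding. Duality $G\mapsto G^\vee$ plays no role in this, so the Gorenstein worry is a red herring.

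The missing idea is elementary:
\begin{itemize}
\item Appending the standard column $e_n$ to an $n\times(n+u)$ matrix gives the same class in $\mathcal{C}_{u+1}$ as deleting its last row: clear that row by column operations, then strip off the unit.
\item Suppose the law of $\mathcal{M}$ is $GL_n\times GL_{n+u}$-invariant. Left multiplication by an independent Haar $g\in GL_n(R)$ preserves the class and turns $[\mathcal{M}\,|\,e_n]$ into $[\mathcal{M}'\,|\,ge_n]$. Here $\mathcal{M}'$ has the law of $\mathcal{M}$ and is independent of $ge_n$.
\item The vector $ge_n$ is a Haar vector conditioned to be nonzero mod $\mathfrak{m}$, hence within total variation $|k|^{-n}$ of a Haar vector. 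So appending a Haar column is asymptotically the same as deleting a row.
\end{itemize}
This bound is uniform in the law of $\mathcal{M}$. So you may apply it to $\mathcal{M}=Haar_{n,n+u}$ conditioned on its class and let $n\to\infty$. Write $P_{\mathrm{col}}$ for appending a column ($\mathcal{C}_u\to\mathcal{C}_{u+1}$) and $P_{\mathrm{row}}$ for appending a row ($\mathcal{C}_{u+1}\to\mathcal{C}_u$). You get the adjointness
\[
\mu_u(G)\,P_{\mathrm{col}}(G,E)=\mu_{u+1}(E)\,P_{\mathrm{row}}(E,G).
\]
It follows that $\mu_u(G)\Delta_u(G,H)=\sum_E\mu_{u+1}(E)P_{\mathrm{row}}(E,G)P_{\mathrm{row}}(E,H)$, which is symmetric in $G,H$. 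Your weights $w(G,H)=\sum_E c(G,E)c(H,E)$ have exactly this shape. But the ``matching of the two half-steps'' that you leave open \emph{is} this adjointness.

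Several supporting claims are also wrong in the stated generality.
\begin{itemize}
\item The weights $|\mathrm{Aut}(G)|^{-1}|G|^{-u}$ are not the stationary law for general $R$. For $R=\mathbb{F}_q$ and $u=0$, the limiting probability of corank $j$ is proportional to $q^{-j^2}\prod_{i=1}^{j}(1-q^{-i})^{-2}$, not to $|GL_j(\mathbb{F}_q)|^{-1}=q^{-j^2}\prod_{i=1}^{j}(1-q^{-i})^{-1}$. For $u<0$ over $\mathbb{Z}_p$ the cokernel is infinite, so $|G|^{-u}$ is meaningless.
\item No formula is needed, since the stationary measure is simply the limit law $\mu_u$. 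Its existence must still be proved, and you omit this. It follows by reducing mod $\mathfrak{m}$: a matrix lies in the distinguished class $O_u(R)$ iff its reduction lies in $O_u(k)$, whose limiting probability is $\prod_{i>|u|}(1-|k|^{-i})>0$.
\item Your passage from finite quotients to profinite $R$ rests on cokernels being finite almost surely. This fails in Krull dimension $\geq 2$: over $\mathbb{F}_p[[x,y]]$ the cokernel of a $1\times1$ Haar matrix $f$ is $R/(f)$. The argument above works directly over $R$ and needs no such reduction.
\item In Step~1, $\mathrm{Aut}(G)$-invariance handles only the column move. For the row move you need that, over a local ring, a matrix of given shape is determined up to (stable) $GL\times GL$-equivalence by its cokernel; this is uniqueness of minimal free resolutions. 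Also, your ``extend $G$ by $R$'' description presumes $\ker A=0$, which holds over $\mathbb{Z}_p$ with $u=0$ but not in general.
\end{itemize}
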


\begin{mysection}
\item $coker(Haar_{l\lfloor\frac{n+1}{2}\rfloor+k_2 ,l\lfloor\frac{n}{2}\rfloor+k_1}(R))$ is a reversible Markov chain, for any $l \in \Z$ and 
\item More generally, define the following sequence of random matrices $\probM_n$
\end{mysection}

\begin{remark}
In fact, we prove more. The random variable $coker(Haar_{n,m}(R))$ is a "two-dimensional" Markov chain, governed by a certain family of transition operators. We refer to \S \ref{sec:randomoperators} for more details.
\end{remark}

\begin{remark}
By itself, the \textit{Markov property} is not surprising, and can be deduced from the following two facts:
\begin{itemize}
\item Suppose that a module is defined as a set of generators satisfying linear relations. Adding another Haar random relation has the effect of modding out by a Haar random element of this module.
\item The module $\coker(M)$ is uniquely determined by $\coker(M^T)$ and the dimensions of $M$.
\end{itemize}
Indeed, the first fact tells us what happens to the cokernel when we add a Haar random column vector to a matrix. The second fact shows that this infomation is sufficient to determine what happens when we add a Haar random row vector.
\end{remark}

However, the approach in this paper can be generalized to prove the Markov property, as well as reversibility, for \textit{symmetric} matrices over $R$. In this situation, the origin of the Markov property is a priori much less evident.

\begin{remark}
In this paper, contrary to \cite{arxiv}, the generator of the Markov chain is not described explicitly. Rather, the Markov property is deduced from invariance properties of the Haar measure, as we briefly elaborate below.
\end{remark}

\subsection{Approach}
\label{sec:approach}

Here, we say a few words about our approach. We will define an equivalence relation on matrices over $R$, not necessarily square, of arbitrary dimension:
\[M_1 \sim M_2 \]
if and only if there exist $n$ and $m$ such that:

\begin{equation}
\label{eqn:eqrelation}
\begin{mat}{c|c}
M_1& 0 \\ \hline
0  & \mathbf{I}_n
\end{mat}=
g^T
\begin{mat}{c|c}
M_2& 0 \\ \hline
0  & \mathbf{I}_m
\end{mat}g' \hspace{0.7in} \text{ for some } g,g' \in GL(R)
\end{equation}

This equivalence relation has the property that two matrices are equivalent if and only if they have the same cokernel \textit{and} the difference between the number of columns and the number of rows is the same for both matrices.

However, the equivalence relation turns out to be more convenient to work with, without any reference to the cokernel. Indeed, it follows almost immediately from the invariance properties of the Haar measure that
\[
Haar_{n,n+u}(R)
\]
induces a reversible Markov chain on equivalence classes of matrices, under the equivalence relation (\ref{eqn:eqrelation}). \autoref{thm:maintheorem} can then be deduced as a corollary.

\begin{mysection}
\begin{remark}
Thus, two matrices can have the same cokernel, yet lie in different classes. Moreover, it is incorrect to say that equivalence classes are in bijection with 
\[
R-mod \times \Z
\]
Indeed, the minimal presentation of a given module can have many more relations than generators.
\end{remark}
\end{mysection}

\begin{itemize}
\item[] There are two other reasons why it is preferable to work with equivalence classes of matrices rather than the modules that parametrize them.

\begin{itemize}
\item This point of view will add flexibility, as we can mildly modify the equivalence relation. Indeed, although we do not elaborate on this in the present paper, all the results go through if we consider a weaker equivalence relation than (\ref{eqn:eqrelation}), namely if we replace the group $GL(R)$ by $SL(R)$.

\item Most importantly, the point of view also has the potential of readily generalizing to the study of matrices with symmetry, such as symmetric, anti-symmetric, or Hermitian matrices. In these cases, the equivalence classes are not parametrized by modules, but by modules with extra information. Rather than pin down this extra information, it seems more convenient to work with the equivalence classes directly.
\end{itemize}
\end{itemize}


We will write $\Coker(M)$ to denote the equivalence class of $M$.

\subsection{Outline}

In \S \ref{sec:eqrelation}, we make some remarks on the equivalence relation (\ref{eqn:eqrelation}). In \S \ref{sec:cokernel}, we show that the equivalence class, to which a matrix $M$ belongs, is determined by $coker(M)$ and by the difference between the number of rows and the number of columns of $M$. In \S \ref{sec:markovchain}, we show that the process induced by $Haar_{n.n+u}$ on equivalence classes is a Markov chain. In \S \ref{sec:recurrence}, we show that this Markov chain is recurrent. This is done by reducing to the finite field case. We also draw a connection to the recent work of Sawin and Wood, \cite{SawinWood}. In \S \ref{sec:randomoperators}, we define other transition operators, and show that a certain Markov chain is reversible. These results will imply that the process $coker(Haar_{n,n+u})$ is also a reversible Markov chain. In \S \ref{sec:relationwitharxiv}, we comment on the relation of the transition operators defined in \cite{arxiv} to the transition operators defined in this paper. Lastly, in \S \ref{sec:othercomments}, we discuss what happens when we change the ring $R$.

\subsection{Previous work}

The finite field case appears in many papers and it is difficult to find a precise attribution. The $\Z_p$ case was studied by Roger Van Peski, who computed, in particular, the joint distribution of 
$
coker(Haar_{n,N}) \text{ and } coker(Haar_{n,N+k})
$
in \cite[Theorem 1.3, part 2]{Van_Peski_2021}.
\newline
\newline
\noindent
Crucial to the results in this paper is the equivalence relation (\ref{eqn:eqrelation}). A very similar equivalence relation was used, independently, by Jiahe Shen \cite{shenuniversality} in his very interesting proof of quantitative universality for symmetric and anti-symmetric $p$-adic matrices.

\subsection{Acknowledgements}

The general proof of reversibility was inspired by a fruitful e-mail exchange with Roger Van Peski. The author would like to thank Alex Cowan and Jerry Wang for the invitation to speak about this research and about \cite{arxiv}, at the Waterloo number theory seminar in February 2026. The author would also like to thank Alexander Van Werde for numerous fruitful discussions, and Alexander Yu for his encouragement during the writing of this paper. 
\newline
\newline
\noindent
AI was not used in the course of this project.

\section{The equivalence classes}
\label{sec:eqrelation}

$(R,\mathfrak{m})$ is a complete local ring with a finite residue field. $G \defeq GL(R)$. 

We recall that, in \S \ref{sec:approach}, we have defined an equivalence relation, where $M_1 \sim M_2$ if and only if there exist $n$ and $m$ such that:
\[
\begin{mat}{c|c}
M_1& 0 \\ \hline
0  & \mathbf{I}_n
\end{mat}=
g^T
\begin{mat}{c|c}
M_2& 0 \\ \hline
0  & \mathbf{I}_m
\end{mat}g' \hspace{0.7in} \text{ for some } g,g' \in GL
\]

\begin{definition}
For $u \in \Z$, $\Cr$ is the set of equivalence classes of matrices such that $ \#\text{cols } - \#\text{rows } = u$
\end{definition}

Recall that, given a matrix $M$, we denote by $\Coker(M)$ the equivalence class to which $M$ belongs.

\begin{remark}
$\Cr$ has a distinguished element, i.e. the equivalence class of
\[
\begin{mat}{c}
1
\end{mat}
\text{ or }
\begin{mat}{c}
1 \\
\mathbf{0}
\end{mat}
\text{ or }
\begin{mat}{cc}
1 & \mathbf{0}
\end{mat}
\]
We will denote this equivalence class as $O_{u}(R)$.
\end{remark}

\begin{lemma}
\label{lem:basechange:k}
A matrix $M$ over $R$ belongs to the equivalence class $O_{u}(R)$ if and only if $M \mod \mathfrak{m}$ belongs to $O_{u}(k)$.
\end{lemma}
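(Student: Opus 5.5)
We first describe $O_u(R)$ concretely. By definition, $M$ (an $a\times b$ matrix with $b-a=u$) lies in $O_u(R)$ iff, after adding identity blocks, it becomes $GL$-equivalent to $[1\;\mathbf{0}]$ or $\begin{smallmatrix}1\\ \mathbf 0\end{smallmatrix}$ with identity blocks. The latter is a matrix of the form $[\mathbf I_a\;\mathbf 0]$ when $u\ge 0$, or $\begin{smallmatrix}\mathbf I_b\\ \mathbf 0\end{smallmatrix}$ when $u\le 0$. So the claim to prove is: $M$ is equivalent to the ``maximal rank'' normal form over $R$ iff $\bar M = M \bmod \mathfrak m$ is equivalent to it over $k$. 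Over a field, this just says $\bar M$ has full rank $\min(a,b)$. The forward direction is immediate: reducing the identity $\mathrm{diag}(M,\mathbf I_n)=g^T\,\mathrm{diag}(E,\mathbf I_m)\,g'$ mod $\mathfrak m$ gives the same identity over $k$, since $GL(R)\to GL(k)$ is well defined.

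For the converse, assume without loss of generality $u\ge 0$; the case $u\le0$ follows by transposing. The equivalence relation is symmetric under transposition, since $g^T$ and $g'$ simply swap roles. Suppose $\bar M$ is equivalent to $[\mathbf I\;\mathbf 0]$ over $k$, possibly after stabilizing by $\mathbf I_n$. Replace $M$ by $\mathrm{diag}(M,\mathbf I_n)$, which is in the same class over $R$. Now $\bar M$ is $a\times b$ of rank $a$, so some $a\times a$ minor of $\bar M$ is nonzero in $k$. The corresponding minor of $M$ is then a unit in $R$, because $R$ is local and an element is a unit iff its image in $k$ is nonzero. After a column permutation, we have $M=[A\;B]$ with $A\in GL_a(R)$. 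Then
\[
A^{-1}M\begin{mat}{cc}\mathbf I & -A^{-1}B\\ 0&\mathbf I\end{mat}=[\mathbf I_a\;\mathbf 0].
\]
Here the left factor is $(A^{-1})^{T\,T}$, so it has the required form $g^T$ with $g=(A^{-1})^T\in GL(R)$. Hence $M\in O_u(R)$.

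The only step needing care is the bookkeeping of the first reduction: checking that equivalence over $k$ to $O_u(k)$, which a priori involves stabilization, really does force $\bar M$ itself to have full row rank. This holds because rank minus size is invariant under adding $\mathbf I$ blocks and under multiplication by invertible matrices. Beyond that, the key input is the locality of $R$ (unit lifting); completeness and profiniteness are not needed here.
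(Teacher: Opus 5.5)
Your proof is correct and takes the same route as the paper. The paper's one-line argument is that a square matrix over the local ring $R$ is invertible iff its reduction mod $\mathfrak m$ is invertible, combined with row and column operations; you carry this out explicitly through a unit maximal minor and the reduction to $[\mathbf I_a\;\mathbf 0]$, and you also supply the bookkeeping the paper leaves implicit (that stabilization over $k$ still forces $\bar M$ to have full rank, and the transpose reduction for $u\le 0$).
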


\begin{proof}
A square matrix over $R$ is invertible if and only if its reduction $\mod \mathfrak{m}$ is invertible. \autoref{lem:basechange:k} can be deduced from this fact using row and column operations.
\end{proof}

\section{The cokernel map}
\label{sec:cokernel}

\begin{theorem}
\label{thm:cokernel}
The map
\[
C_u \xrightarrow{\text{coker}} R-\mathbf{mod}
\]
is well-defined and injective
\end{theorem}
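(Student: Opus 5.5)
Well-definedness is the easy half. For a matrix $M$ with $r$ rows and $c$ columns, $\coker(M) = R^r / M R^c$ (columns as relations). Adding an identity block,
\[
\coker \begin{mat}{c|c} M & 0 \\ \hline 0 & \mathbf{I}_n \end{mat} \cong \coker(M),
\]
since the extra generators are killed by the extra relations. Multiplying on the left by the invertible matrix $g^T$ and on the right by $g'$ changes the presentation by automorphisms of the free modules, so the cokernel is unchanged up to isomorphism. Hence $M_1 \sim M_2$ implies $\coker(M_1)\cong\coker(M_2)$, and $\coker$ is well-defined on $\Cr$.

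For injectivity, I would fix $M_1, M_2$ with $\#\text{cols}-\#\text{rows}=u$ for both and $\coker(M_1)\cong\coker(M_2)=:N$, and reduce both to a normal form depending only on $N$ and $u$.

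\textbf{Step 1: reduce to a minimal presentation.} Let $M$ be $r\times c$. If some entry of $M$ is a unit, row and column operations (elements of $GL$) move it to position $(1,1)$ and clear its row and column. This gives $M\sim M'$ with $M'$ of size $(r-1)\times(c-1)$, and $u$ is preserved. Iterating, we may assume every entry of $M$ lies in $\mathfrak{m}$. In that case $M \bmod \mathfrak{m}=0$, so $r=\dim_k N/\mathfrak{m}N$ by Nakayama. Thus the number of generators $r=:d(N)$ is an invariant, and so is $c=d(N)+u$. (If $r$ or $c$ hits $0$ the same bookkeeping applies; the degenerate cases are covered by the distinguished class $O_u$ and Lemma~\ref{lem:basechange:k}.)

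\textbf{Step 2: compare two minimal presentations.} Now $M_1,M_2$ are both $d\times(d+u)$ with entries in $\mathfrak{m}$ and isomorphic cokernels. Lift the isomorphism $R^d/M_1R^{d+u}\to R^d/M_2R^{d+u}$ to a map $h:R^d\to R^d$. Its reduction mod $\mathfrak{m}$ is an isomorphism $k^d\to k^d$, so $h\in GL_d(R)$. Replacing $M_1$ by $hM_1$, we may assume $M_1R^{d+u}=M_2R^{d+u}=:K$ as submodules of $R^d$. It then suffices to find $g'\in GL_{d+u}(R)$ with $M_1 g' = M_2$, after possibly stabilizing by an identity block.

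\textbf{Step 3: the main obstacle.} The difficulty is that the relation module $K$ need not be free, so two surjections $R^{d+u}\to K$ need not differ by an automorphism of $R^{d+u}$. Here the stabilization in the equivalence relation does the work, via a Schanuel-type swindle. Write $M_1 = M_2 A$ and $M_2 = M_1 B$ for matrices $A,B$, which exist because both matrices have column space $K$. Then
\[
\begin{mat}{c|c} M_1 & 0 \end{mat} \begin{mat}{c|c} I & -B \\ \hline 0 & I \end{mat} = \begin{mat}{c|c} M_1 & -M_2 \end{mat},
\qquad
\begin{mat}{c|c} M_1 & -M_2 \end{mat} \begin{mat}{c|c} I & 0 \\ \hline A & I \end{mat} = \begin{mat}{c|c} 0 & -M_2 \end{mat},
\]
using $M_1 - M_2A = 0$. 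Composing with a column permutation and a sign change shows that $\begin{mat}{c|c} M_1 & 0 \end{mat}$ and $\begin{mat}{c|c} M_2 & 0 \end{mat}$ (each padded with $d+u$ zero columns) are related by an element of $GL$.

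This is stabilization by zero columns, not by $\begin{mat}{c|c} \cdot & 0 \\ \hline 0 & \mathbf{I} \end{mat}$, so I still need to convert it. I would try the following: adjoin $\mathbf{I}_{d+u}$ below the zero columns, i.e. pass to $\begin{mat}{c|c} M_i & 0 \\ \hline 0 & \mathbf{I} \end{mat}$. Then I would perform the same column operations, using the identity rows to clear the cross terms by row operations. Checking that the resulting block matrices match exactly is the step I expect to require the most care. Once it is done, $M_1\sim M_2$, which proves injectivity.
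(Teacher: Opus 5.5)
\textbf{The gap is in Step 3.} Your Steps 1 and 2 are sound and follow the paper's route: you reduce to presentations with entries in $\mathfrak m$, then align the column spaces by an element of $GL_d(R)$. Step 3, however, is left unfinished, and the conversion you sketch cannot be completed.

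Performing your two column operations on $M_1\oplus\mathbf I$ (your identity-padded matrix) gives $\begin{pmatrix}0&-M_2\\ A&\mathbf I\end{pmatrix}$. The block swap that ends the zero-column swindle now yields $\begin{pmatrix}-M_2&0\\ \mathbf I&A\end{pmatrix}$. Here the generally non-invertible $A$ sits where the padding identity should be. Clearing the cross terms with the identity rows and columns simply returns $M_1\oplus\mathbf I$. Finishing would require choosing $A$ invertible, which is precisely the statement you labelled the obstacle.

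No cleverer formal manipulation using only $M_1=M_2A$ and $M_2=M_1B$ can succeed either. The kernel is an invariant of the class, since $\ker(M\oplus\mathbf I_n)\cong\ker M$ and $\ker(g^TXg')=g'^{-1}\ker X$. Over the non-local ring $\mathbb R[x,y,z]/(x^2+y^2+z^2-1)$, the rows $M_1=[x\ y\ z]$ and $M_2=[1\ 0\ 0]$ satisfy both identities and have the same cokernel and the same $u$. Yet $\ker M_1$ is stably free but not free, while $\ker M_2\cong R^2$, so $M_1\not\sim M_2$. Zero-column stabilization only ever gives $\ker M_1\oplus R^c\cong\ker M_2\oplus R^c$, so the local hypothesis has to enter again at this point.

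\textbf{The missing idea.} The obstacle you named does not exist over a local ring: if $M_1,M_2$ are $d\times c$ with the same column space $K$, then $M_1g'=M_2$ for some $g'\in GL_c(R)$.
To see this, let $\mu=\dim_k K/\mathfrak mK$ and choose $\mu$ columns of $M_1$ whose images form a basis of $K/\mathfrak mK$. By Nakayama they generate $K$, so every other column is an $R$-combination of them. Column operations therefore turn $M_1$ into $[M_1'\mid 0]$ with $M_1'$ of size $d\times\mu$. Likewise $M_2$ becomes $[M_2'\mid 0]$, with the same number $c-\mu$ of zero columns.
Write $M_2'=M_1'A'$. The reduction of $A'$ modulo $\mathfrak m$ is the change-of-basis matrix between two bases of $K/\mathfrak mK$, so $A'\in GL_\mu(R)$. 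This is exactly the uniqueness of minimal free resolutions that the paper invokes. With it, your Step 3 is settled by column operations alone, with no stabilization or swindle needed.
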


\begin{proof}
It follows from the definition of the equivalence relation that the map is well-defined. The injectivity follows from the uniqueness of minimal free resolutions of modules over local rings, \cite[Theorem 20.2]{Eisenbud}.
\end{proof}

\begin{lemma}
$\coker(O_u(R)) \cong R^{|u|}$ if $u<0$ and $\coker(O_u(R))$ is trivial if $u\geq 0$.
\end{lemma}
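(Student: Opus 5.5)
The plan is to compute the cokernel directly on one explicit representative of $O_u(R)$; the only thing to check first is that such a representative exists in each $\Cr$, with the correct column-minus-row count. Since $\coker$ is well-defined on equivalence classes by \autoref{thm:cokernel}, any representative will do.

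For $u \geq 0$, take the $1 \times (1+u)$ matrix $M = \begin{mat}{cc} 1 & \mathbf{0} \end{mat}$, whose column-minus-row count is $(1+u) - 1 = u$. For $u < 0$, take the $(1+|u|) \times 1$ matrix $M = \begin{mat}{c} 1 \\ \mathbf{0} \end{mat}$, whose count is $1 - (1+|u|) = u$. The $1\times 1$ matrix $\begin{mat}{c} 1 \end{mat}$ is the case $u=0$ of the first family.

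We view an $a \times b$ matrix as a map $R^b \to R^a$, so that $\coker(M) = R^{\#\text{rows}}/\mathrm{im}(M)$. This convention is forced by the requirement that padding by $\mathbf{I}_n$ and multiplying by invertible matrices leave the cokernel unchanged, which is what makes $\coker$ well-defined on classes.

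For $u \geq 0$, the matrix $M$ maps $R^{1+u} \to R$, and its image contains $1$ because the first column is $1$. So $M$ is surjective and $\coker(M) = 0$. For $u<0$, the matrix $M$ maps $R \to R^{1+|u|}$, and its image is $R e_1$. Hence $\coker(M) = R^{1+|u|}/Re_1 \cong R^{|u|}$.

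There is no real obstacle here. The one point needing care is the orientation convention: a matrix with more rows than columns has a free cokernel, while one with more columns than rows is surjective. This is consistent with the statement's convention that $\#\text{cols} - \#\text{rows} = u$.
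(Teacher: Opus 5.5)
Your proof is correct and takes the same approach as the paper, which simply says the lemma follows from the definition of $O_u(R)$: compute the cokernel of the explicit representative $(1,0,\dots,0)$, taken as a row when $u\geq 0$ and as a column when $u<0$. One side remark needs fixing: the convention $\coker(M)=R^{\#\text{rows}}/\mathrm{im}(M)$ is not forced by invariance under the equivalence relation, since $\coker(M^T)$ is equally invariant under padding by $\mathbf{I}_n$ and under $M\mapsto g^TMg'$ (and would swap your two cases); your convention is simply the standard one and the one the paper uses, as in \autoref{lem:recur}, where $u<0$ corresponds to corank $|u|$.
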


\begin{proof}
This follows from the definition of $O_u(R)$.
\end{proof}


\section{A random operator and a Markov chain on
$\mathcal{C}_u$}
\label{sec:markovchain}

First, we specify what we mean by a \textit{random operator} from a set $X$ to a set $Y$.

\begin{definition}
A random operator from $X$ to $Y$ is a linear operator from measures
on $X$ to measures on $Y$, that sends probability measure to
probability measures.
\end{definition}

\begin{remark}
The pro-finite ring $R$ is equipped with a canonical measure,
alternatively called the uniform measure, or the additive Haar measure.
\end{remark}

\begin{definition}
We say an $R$-valued random variable is Haar random if it is
distributed according to the Haar measure on $R$.
\end{definition}

In this section, we will write $*$ to denote a Haar random entry in a
matrix. If $*$ appears more than once, the corresponding entries will
be \textit{independent} Haar random variables.

Consider the random operator from $\text{Mat}(R)$ to $\text{Mat}(R)$,
defined as
\begin{equation}
\label{eqn:deltar}
M \rightarrow
\begin{mat}{ccc|c}
&&&*
\\
&M&&\vdots
\\
&&&*
\\ \hline
*&\hdots&*&*
\end{mat}
\end{equation}

\begin{theorem}
\label{thm:deltar}
(\ref{eqn:deltar}) defines a random operator from $\Cr$ to $\Cr$, for all $u$.
\end{theorem}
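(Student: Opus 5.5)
The claim to verify is this: if $M_1 \sim M_2$, then the law of $\Coker$ of the bordered random matrix in (\ref{eqn:deltar}) built from $M_1$ equals the law built from $M_2$. Once that holds, the operator descends to a well-defined random operator on $\Cr$. The number $\#\text{cols}-\#\text{rows}$ is unchanged by adding one row and one column, so the image lands in $\Cr$. Since $\sim$ is generated by two elementary moves, it suffices to check invariance under each separately.

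\textbf{Move 1: replacing $M$ by $g^T M g'$ with $g,g'\in GL$ of the appropriate sizes.} Write the bordered matrix as $\begin{mat}{c|c} M & v \\ \hline w & x\end{mat}$, with $v,w,x$ independent Haar. Then
\[
\begin{mat}{c|c} g^T & 0 \\ \hline 0 & 1\end{mat}
\begin{mat}{c|c} M & v \\ \hline w & x\end{mat}
\begin{mat}{c|c} g' & 0 \\ \hline 0 & 1\end{mat}
=
\begin{mat}{c|c} g^TMg' & g^Tv \\ \hline wg' & x\end{mat}.
\]
Haar measure on $R^n$ is invariant under $GL_n(R)$, because an invertible linear map is a measure-preserving group automorphism of the compact group $R^n$. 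Hence $(g^Tv, wg', x)$ has the same law as $(v,w,x)$. So the bordering of $g^TMg'$ has the same law as a matrix equivalent to the bordering of $M$, and the two have the same law of $\Coker$.

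\textbf{Move 2: replacing $M$ by $\begin{mat}{c|c} M & 0\\ \hline 0 & \mathbf{I}_n\end{mat}$.} By induction it suffices to take $n=1$. The bordering of $\begin{mat}{c|c} M&0\\ \hline 0&1\end{mat}$ is
\[
\begin{mat}{ccc} M & 0 & v\\ 0 & 1 & a\\ w & b & x\end{mat}.
\]
The entry $1$ is a unit, so row and column operations using it clear $a$ and $b$. These are multiplications by elementary matrices in $GL$ on each side. The result is
\[
\begin{mat}{ccc} M & 0 & v\\ 0&1&0\\ w&0&x-ab\end{mat}.
\]
Permuting rows and columns (again by matrices in $GL$) turns this into $\begin{mat}{c|c} N & 0\\ \hline 0&1\end{mat}$, where $N=\begin{mat}{cc} M & v\\ w & x-ab\end{mat}$. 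So it lies in the class $\Coker(N)$. Now $x$ is Haar and independent of $(v,w,a,b)$. Translation invariance of Haar measure then implies that $x-ab$ is Haar and independent of $(v,w)$. Therefore $N$ has exactly the law of the bordering of $M$.

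\textbf{Conclusion.} Together the two moves give invariance of the law of the image class under $\sim$. One small point remains: the resulting map on measures must be linear and send probability measures to probability measures. This holds because we define the operator on each class and extend it linearly.

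The main obstacle is making the conditioning argument in Move 2 precise, and in particular its independence claim. Conditional on $(v,w,a,b)$, the variable $x-ab$ is a translate of a Haar variable, hence Haar. This conditional law does not depend on $(v,w,a,b)$, which gives independence. A secondary technical point is that the random operator is defined on $\Cr$, whose elements are countably many classes when $R$ is profinite. Measurability is harmless here, since $\Coker$ of a matrix over a profinite ring is determined by the finitely presented cokernel (\autoref{thm:cokernel}) and is locally constant in the entries.
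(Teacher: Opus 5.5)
Your proof is correct and follows the paper's own argument. Move~1 is \autoref{lem:ginvariance} and Move~2 is \autoref{lem:deltar}: you use the same unit-pivot row and column clearing, and the same translation-invariance step, where your $x-ab$ plays the role of the paper's $c-\mathbf{w}^T\mathbf{v}$.

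One correction to your closing aside, which the main argument does not rely on. $\Coker$ is not locally constant in the entries: over $\Z_p$, $[p^N]\to[0]$ while the cokernel jumps from $\Z/p^N$ to $\Z_p$. Nor need $\Cr$ be countable: over $\Z_p[[x]]$ the modules $R/(x-a)$, $a\in p\Z_p$, are pairwise non-isomorphic. The paper does not address measurability; one way to handle it is to give $\Cr$ the quotient $\sigma$-algebra from the matrix spaces and use Fubini to check that the transition kernel is measurable.
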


\paragraph{Proof of \autoref{thm:deltar}}

We will prove \autoref{thm:deltar} by proving \autoref{lem:deltar} and \autoref{lem:ginvariance}. Together, these will imply \autoref{thm:deltar}.
We will write
\begin{equation}
\label{eqn:tilde}
M_1 \sim M_2
\end{equation}
  to denote that $M_1$ and $M_2$ are in the same class in $\Cr$. If
$M_1$ and $M_2$ are random matrices, (\ref{eqn:tilde}) will mean that the two matrices induce the same distribution on $\Cr$. 

\begin{lemma}
\label{lem:deltar}
\[
\begin{mat}{ccc|ccc|c}
&& & && &*
\\
&M& & &\mathbf{0}& &\vdots
\\
&& & &&  &*
\\\hline
&& & &&  &*
\\
&\mathbf{0}& & &\mathbf{I}&  &\vdots
\\
&& & &&  &*
\\ \hline
*&\hdots&* & * &\hdots& * &*
\end{mat}
\sim
\begin{mat}{ccc|ccc|c}
&& & && &*
\\
&M& & &\mathbf{0}& &\vdots
\\
&& & &&  &*
\\\hline
&& & &&  &
\\
&\mathbf{0}& & &\mathbf{I}&  &\mathbf{0}
\\
&& & &&  &
\\ \hline
*&\hdots&* & &  \mathbf{0}&  &*
\end{mat} \sim 
\]
\[
\sim 
\begin{mat}{ccc|c}
&& &*
\\
&M& &\vdots
\\
&&  &*
\\ \hline
*&\hdots&* & *
\end{mat}
\]
\end{lemma}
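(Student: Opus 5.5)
The plan is to prove the two equivalences separately. In each case I use row and column operations by elementary matrices in $GL(R)$, which preserve the equivalence class. The key point is that these operations act on the Haar random entries by translations depending only on the other (independent) entries. Translation invariance of the Haar measure on $R$, together with independence, then shows that the transformed random matrix has the same distribution as the target.

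For the first equivalence, write the random last column as $(c_1, c_2, c_3)$, with $c_1$ alongside $M$, $c_2$ alongside the identity block $\mathbf{I}$, and $c_3$ the corner entry. Write the random last row as $(r_1, r_2, c_3)$, with $r_2$ the part under $\mathbf{I}$. First I perform column operations: subtract from the last column the $\mathbf{I}$-columns multiplied by the entries of $c_2$. This kills $c_2$ and changes $c_1$ by $M\cdot 0 = 0$, since the $\mathbf{I}$-columns are zero in the $M$-rows. It also changes the corner entry to $c_3 - r_2 \cdot c_2$. Next I perform row operations: subtract from the last row the $\mathbf{I}$-rows multiplied by the entries of $r_2$. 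This kills $r_2$, leaves $r_1$ unchanged since the $\mathbf{I}$-rows are zero in the $M$-columns, and leaves the corner unchanged because $c_2$ is now zero. The resulting matrix has $r_1$ and $c_1$ untouched and corner entry $c_3 - r_2\cdot c_2$. Since $c_3$ is Haar random and independent of $(r_1,r_2,c_1,c_2)$, conditioning on those variables and using translation invariance shows that the new corner is Haar random and independent of $(r_1,c_1)$. The transformed matrix is therefore distributed exactly as the middle matrix, so for each sample the original and transformed matrices lie in the same class, and the induced distributions on $\Cr$ agree.

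For the second equivalence, apply a permutation of rows and columns, which lies in $GL(R)$, to the middle matrix. This moves the $\mathbf{I}$ block to the bottom-right, and the matrix becomes
\[
\begin{mat}{c|c}
\probM' & 0 \\ \hline
0 & \mathbf{I}
\end{mat},
\]
where $\probM'$ is the third matrix in the lemma. By the definition (\ref{eqn:eqrelation}) of the equivalence relation, this is equivalent to $\probM'$ sample by sample. One has to track that the permutation appears as $g^T(\cdot)g'$; both $g$ and $g'$ can be taken to be permutation matrices, so this is harmless. The difference $\#\text{cols}-\#\text{rows}$ is preserved at each step, so all three matrices lie in the same $\Cr$.

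The main obstacle is the careful bookkeeping in the first step: one must check that the operations really fix $c_1$ and $r_1$, and that the modified corner remains Haar random and \emph{jointly} independent of the remaining entries. This is the step where translation invariance is actually used, and I would handle it explicitly by conditioning on all entries other than $c_3$.
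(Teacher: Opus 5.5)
Your proposal is correct and follows essentially the paper's route. The paper reduces to the identity block, clears the entries beside it by row and column operations to obtain the corner $c - \mathbf{w}^T\mathbf{v}$ (your $c_3 - r_2\cdot c_2$), and invokes translation invariance of the Haar measure; your conditioning argument makes explicit the joint independence from $r_1, c_1$ that the paper leaves implicit, and the second equivalence is, as you say, the definition of $\sim$ after a permutation.
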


\begin{proof}
It suffices to prove
\[
\begin{mat}{ccc|c}
&&&*
\\
&\mathbf{I}&&\vdots
\\
&&&*
\\ \hline
*&\hdots&*&*
\end{mat}
\sim
\begin{mat}{ccc|c}
&&&0
\\
&\mathbf{I}&&\vdots
\\
&&&0
\\ \hline
0&\hdots&0&*
\end{mat}
\]

Observe that
\[
\begin{mat}{ccc|c}
&&&
\\
&\mathbf{I}&&\mathbf{v}
\\
&& &
\\ \hline
&\mathbf{w}^{T}&&c
\end{mat}
\
\begin{mat}{ccc|c}
&&&
\\
&\mathbf{I}&&\mathbf{0}
\\
&& &
\\ \hline
&\mathbf{0}&&c - \mathbf{w}^{T} \mathbf{v}
\end{mat}
\]
Now suppose that $c$, $\mathbf{v}$ and $\mathbf{w}$ are independent
and Haar random. In particular $c$ is Haar random and independent of
$\mathbf{w}^T \mathbf{v}$.
\begin{claim}
The sum of two independent random variables is Haar random if one of them is Haar random.
\end{claim}
The claim is a consequence of the fact the Haar distribution is
invariant by translations. It follows that $c - \mathbf{w}^{T}
\mathbf{v}$ is Haar random. This proves \autoref{lem:deltar}.
\end{proof}

\begin{lemma}
\label{lem:ginvariance}
\[
\begin{mat}{ccc|c}
&&& *
\\
&g^T M g'&&\vdots
\\
&& &  *
\\ \hline
*&\hdots&*&*
\end{mat}
\sim
\begin{mat}{ccc|c}
&&& *
\\
&M &&\vdots
\\
&& &  *
\\ \hline
*&\hdots&*&*
\end{mat}
\]
\end{lemma}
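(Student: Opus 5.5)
The idea is to factor the bordered matrix on the left as a block-diagonal conjugation of the bordered matrix on the right. Then we use the fact that multiplying a Haar random vector by an invertible matrix over $R$ gives a Haar random vector. Let $M$ be $a \times b$ and $g \in GL_a(R)$, $g' \in GL_b(R)$; the case of general elements of $GL(R)$ reduces to this after padding $M$ by an identity block, which does not change the class by definition (\ref{eqn:eqrelation}) and is compatible with the bordering by \autoref{lem:deltar}. Write $\mathbf{v}$ for the random column of length $a$, $\mathbf{w}$ for the random row of length $b$, and $c$ for the corner entry, all independent and Haar random. Block multiplication gives
\[
\begin{mat}{c|c}
g^T & 0 \\ \hline
0 & 1
\end{mat}
\begin{mat}{c|c}
M & (g^T)^{-1}\mathbf{v} \\ \hline
\mathbf{w}\, g'^{-1} & c
\end{mat}
\begin{mat}{c|c}
g' & 0 \\ \hline
0 & 1
\end{mat}
=
\begin{mat}{c|c}
g^T M g' & \mathbf{v} \\ \hline
\mathbf{w} & c
\end{mat}.
\]
The outer factors lie in $GL$, so for every fixed realization of $(\mathbf{v},\mathbf{w},c)$ the two bordered matrices lie in the same class of $\Cr$. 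Here $u$ is unchanged because both matrices have the same shape.

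It remains to show that $(g^T)^{-1}\mathbf{v}$, $\mathbf{w} g'^{-1}$ and $c$ are again independent and Haar random. Independence is immediate, since each is a function of a separate independent variable. For the Haar property, the map $x \mapsto hx$ with $h \in GL_a(R)$ is a continuous additive automorphism of the compact group $R^a$. The pushforward of Haar measure under it is therefore a translation-invariant probability measure, and by uniqueness of Haar measure it equals Haar measure. Alternatively, one can check this on each finite quotient $(R/\mathfrak{m}^j)^a$, or $R/I$ for open ideals $I$, where $h$ acts as a bijection and hence preserves the uniform measure. The same argument applies to row vectors.

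Putting these together, the random matrix with border $((g^T)^{-1}\mathbf{v}, \mathbf{w}g'^{-1}, c)$ has the same distribution as the one with border $(\mathbf{v},\mathbf{w},c)$, which is the right-hand side of \autoref{lem:ginvariance}. By the display above, its class coincides pointwise with the class of the left-hand side, so the two sides induce the same distribution on $\Cr$. The only step needing care is the invariance of Haar measure under $GL_a(R)$ in the profinite setting, and the reduction to finite quotients settles it.
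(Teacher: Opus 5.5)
Your proof is correct and follows the paper's argument: the paper also factors the bordered $g^T M g'$ as the transpose of the block-diagonal matrix with blocks $g$ and $1$, times the bordered $M$, times the block-diagonal matrix with blocks $g'$ and $1$, and then invokes the $GL_n(R)$-invariance of Haar measure on $R^n$. The differences are cosmetic: you place the inverses $(g^T)^{-1}\mathbf{v}$ and $\mathbf{w}g'^{-1}$ in the border, where the paper effectively gets $g^T\mathbf{v}$ and $\mathbf{w}g'$, and you spell out the Haar-invariance justification that the paper leaves implicit.
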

\begin{proof}
It suffices to show:
\[
\begin{mat}{ccc|c}
&&& *
\\
&g^T M g'&&\vdots
\\
&& &  *
\\ \hline
*&\hdots&*&*
\end{mat}=
\begin{mat}{ccc|c}
&&& 0
\\
&g&&\vdots
\\
&& &  0
\\ \hline
0&\hdots&0&1
\end{mat}^T
\begin{mat}{ccc|c}
&&& *
\\
&M &&\vdots
\\
&& &  *
\\ \hline
*&\hdots&*&*
\end{mat}
\begin{mat}{ccc|c}
&&& 0
\\
&g'&&\vdots
\\
&& &  0
\\ \hline
0&\hdots&0&1
\end{mat}
\]
But this follows from the $GL_n(R)$-invariance of the Haar measure on $R^n$, for any $n$. This proves \autoref{lem:deltar} and \autoref{thm:deltar}.
\end{proof}

\begin{definition}
We denote by $\Delta_u$ the random operator induced by
(\ref{eqn:deltar}) on $\Cr$.
\[
M \rightarrow
\begin{mat}{ccc|c}
&&& *\\
&M&& \vdots\\
&&& *\\ \hline
*&\hdots&*& *
\end{mat}
\]
\end{definition}

\paragraph{$\mathbf{coker\Big(Haar_{n,n+u}\Big)}$ is a Markov chain.}

We give a corollary of \autoref{thm:deltar}

\begin{definition}
Recall that $Haar$ be an infinite matrix, whose entries are indexed by
$\mathbb{N} \times \mathbb{N}$, with Haar random variables. Recall also that
$Haar_{n,n+u}$ denotes the top-left $n  \times n+u$ minor.
\end{definition}

\begin{Corollary}
\label{cor:one}
The random process $\coker\Big(Haar_{n,n+u}\Big)$ is a Markov chain, generated
by the operator $\Delta_u$.
\end{Corollary}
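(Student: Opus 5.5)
We first show that the process of equivalence classes $\Coker(Haar_{n,n+u})$ in $\Cr$ is a Markov chain with transition operator $\Delta_u$. Then we pass to cokernels using \autoref{thm:cokernel}.

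The key observation is that $Haar_{n+1,n+1+u}$ is obtained from $Haar_{n,n+u}$ by appending a new last row and a new last column, each of the form (\ref{eqn:deltar}). Their $2n+1+u$ new entries are independent Haar random variables, and they are also independent of the entries of $Haar_{n,n+u}$. The shape differs slightly from (\ref{eqn:deltar}): the matrix is $n\times(n+u)$ rather than square. However, \autoref{lem:deltar} and \autoref{lem:ginvariance} never use squareness, since the same block computations and the $GL$-invariance of Haar measure on $R^n$ and $R^{n+u}$ apply verbatim. So by \autoref{thm:deltar}, the law of $\Coker(Haar_{n+1,n+1+u})$ given the whole past $Haar_{1,1+u},\dots,Haar_{n,n+u}$ depends only on $\Coker(Haar_{n,n+u})$, and it equals $\Delta_u$ applied to the point mass at that class.

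Concretely, we condition on the $\sigma$-algebra $\mathcal{F}_n$ generated by the entries of $Haar_{n,n+u}$, which contains the past classes. Conditionally on $\mathcal{F}_n$, the matrix $Haar_{n,n+u}$ is a fixed matrix $M$, and the appended row and column are still independent and Haar. Hence
\[
\probP\big(\Coker(Haar_{n+1,n+1+u}) \in A \mid \mathcal{F}_n\big) = \Delta_u(\delta_{\Coker(M)})(A).
\]
By \autoref{thm:deltar} the right-hand side is a function of $\Coker(M)$ alone. The tower property then gives the Markov property with respect to the filtration generated by the classes themselves. For small $n$, where $n+u$ might be nonpositive, we start the chain at the first $n$ for which both dimensions are positive, or we interpret empty matrices via the identity padding in (\ref{eqn:eqrelation}).

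Finally, by \autoref{thm:cokernel} the map $\Cr\to R\text{-}\mathbf{mod}$ is injective, so $\coker(Haar_{n,n+u})$ is a bijective image of the class process. A bijective relabelling of the state space preserves the Markov property, and the chain is generated by $\Delta_u$ transported along $\coker$. The main obstacle is the measure-theoretic care in the first step: the conditioning must be made rigorous, since $\Cr$ is countable but $\text{Mat}(R)$ is profinite. It must also be checked that the kernel $M\mapsto \Delta_u(\delta_{\Coker(M)})$ is measurable. We would handle this by working modulo $\mathfrak{m}^k$: there everything is finite, and the class of $M$ is determined by $M \bmod \mathfrak{m}^k$ for $k$ large relative to $M$. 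In fact the class is locally constant, since small perturbations are absorbed by $GL$.
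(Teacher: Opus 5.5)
Your main argument is correct and is the one the paper intends. The paper states \autoref{cor:one} as an immediate consequence of \autoref{thm:deltar}: $Haar_{n+1,n+1+u}$ is $Haar_{n,n+u}$ bordered as in (\ref{eqn:deltar}) by fresh independent Haar entries, so conditioning on $\mathcal{F}_n$ gives the kernel $\Delta_u$, and \autoref{thm:cokernel} transports the chain to $\coker$. Your concern about non-square shapes is unnecessary, since (\ref{eqn:deltar}) and \autoref{thm:deltar} are already stated for arbitrary matrices and all $u$.

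What does not hold up is the measure-theoretic fix in your last paragraph, which rests on false claims.

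First, $\Cr$ need not be countable. For $R=k[[x,y]]$, the $1\times 1$ matrices $[x-g(y)]$ with $g\in y\,k[[y]]$ have pairwise non-isomorphic cokernels $R/(x-g(y))$, because their annihilators differ. So $\mathcal{C}_0$ is uncountable.

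Second, the class of $M$ is neither locally constant nor determined by $M \bmod \mathfrak{m}^k$ for large $k$.
\begin{itemize}
\item Over $\Z_p$: $[p^k]\to[0]$, yet $\coker[p^k]=\Z/p^k\Z$ while $\coker[0]=\Z_p$.
\item Over $k[[x,y]]$: for $0\neq f\in\mathfrak{m}$, the class of $[f]$ is determined by the ideal $(f)$. Since $fR$ has infinite index, it is not open. Hence every neighbourhood of $f$ contains some $f+\epsilon$ with $\epsilon\notin fR$, and then $(f+\epsilon)\neq(f)$. This failure happens on a set of measure $1/|k|$.
\end{itemize}
Your route does work for finite $R$, and for $\Z_p$ away from the null set of singular matrices. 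It does not work in the generality of \autoref{thm:maintheorem}.

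No local constancy is needed. Give $\Cr$ the quotient $\sigma$-algebra induced from $\bigsqcup_n \mathrm{Mat}_{n\times(n+u)}(R)$.
\begin{itemize}
\item $\Coker$ is then measurable by construction.
\item The function sending $M$ to the probability that its bordered matrix has class in $A$ is Borel by Fubini, because bordering is continuous.
\item By \autoref{thm:deltar} this function is constant on classes, so it descends to a measurable kernel on $\Cr$.
\end{itemize}
This structure is a reasonable one: within a fixed size, the classes are exactly the orbits of the compact group $GL_n(R)\times GL_{n+u}(R)$, by uniqueness of minimal presentations. So the classes are closed, and in particular singletons are measurable.
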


\section{Positive recurrence of the Markov chain $\coker\Big(Haar_{n,n+u}\Big)$}
\label{sec:recurrence}

\begin{theorem}
\label{thm:recur}
The Markov chain $\Coker(Haar_{n,n+u})$ is positive recurrent. Equivalently, the Markov chain \[ \coker(Haar_{n,n+u}) \] is positive recurrent.
\end{theorem}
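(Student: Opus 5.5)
The plan is to use the distinguished state $O_u(R)$ as a renewal state and to reduce everything to the residue field $k$, as the section introduction suggests. By \autoref{thm:cokernel} the two formulations in the statement are equivalent: the cokernel map from $\Cr$ is injective, so the process on equivalence classes and the cokernel process determine each other. It therefore suffices to treat the chain $\Coker(Haar_{n,n+u})$ on $\Cr$ generated by $\Delta_u$ (\autoref{cor:one}).

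\textbf{Step 1: reduction mod $\mathfrak{m}$.} Reducing a Haar random matrix over $R$ modulo $\mathfrak{m}$ gives a matrix with independent uniform entries in $k$. Moreover, reducing a step of $\Delta_u$ over $R$ gives a step of $\Delta_u$ over $k$. Hence $\Coker(Haar_{n,n+u}(R)) \bmod \mathfrak{m}$ is exactly the chain $\Coker(Haar_{n,n+u}(k))$. By \autoref{lem:basechange:k}, the $R$-chain is in $O_u(R)$ at time $n$ if and only if the $k$-chain is in $O_u(k)$ at time $n$. So return times to $O_u(R)$ coincide with return times to $O_u(k)$, pathwise, for the coupled chains started from $O_u$.

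\textbf{Step 2: the finite field chain.} Over $k$, classes in $\Cr$ are parametrized by the corank, equivalently by $\dim\coker$. The chain is a nearest-neighbour walk on $\mathbb{N}$ whose transition probabilities can be computed explicitly, by adding a random column and then a random row. I will show it is positive recurrent with stationary law the Cohen--Lenstra type distribution, proportional to $q^{-j(j+|u|)}\prod(\cdots)^{-1}$. The simplest route is to exhibit the birth--death detailed balance ratios $p(j\to j+1)/p(j+1\to j) \approx q^{-(2j+|u|+1)}$. These ratios are summable, which gives a finite stationary measure, and for an irreducible birth--death chain this is equivalent to positive recurrence. Irreducibility follows from both nearest-neighbour transitions having positive probability.

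\textbf{Step 3: transfer.} Positive recurrence of the $k$-chain at $O_u(k)$ means the expected return time is finite. By Step 1, the expected return time of the $R$-chain to $O_u(R)$ is the same, so $O_u(R)$ is a positive recurrent state. To conclude for the whole chain, I will show that every class in $\Cr$ reached from $O_u(R)$ can return to it, i.e.\ the chain restricted to the communicating class of $O_u(R)$ is irreducible. Every state $x$ is visited, if at all, only through paths that pass through $O_u$ with positive probability: from any $M$, the $k$-chain hits corank $0$ almost surely, and at that moment the $R$-chain is at $O_u(R)$. Hence every state communicates with $O_u(R)$, and positive recurrence spreads to the class.

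The main obstacle is Step 2. It requires computing the finite-field transition probabilities carefully for general $u$, including negative $u$ and the row/column asymmetry, and checking summability. Step 1 also needs care: the reduction of the $\Delta_u$ step must really be the $k$-version of $\Delta_u$, using that reduction of Haar measure on $R$ is uniform on $k$.
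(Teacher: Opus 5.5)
Your proof is correct. Its skeleton, reducing to the residue field via \autoref{lem:basechange:k}, is the paper's, but you handle the finite-field step and the transfer differently.

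\textbf{Finite-field step.} The paper never computes transition probabilities. It uses the closed-form full-rank probability $\prob(Haar_{n,n+u}(k)\in O_u(k))$, a finite product of factors $1-|k|^{-i}$ that converges to $\prod_{i>|u|}(1-|k|^{-i})>0$. So the $n$-step return probability to $O_u(k)$ does not tend to $0$, which forces positive recurrence.

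\textbf{Transfer.} The paper passes to $R$ using only the equality of these fixed-time marginals, $\prob(Haar_{n,n+u}(R)\in O_u(R))=\prob(Haar_{n,n+u}(k)\in O_u(k))$. You instead identify return times pathwise. The paper's route is shorter and needs neither the birth--death structure nor irreducibility over $k$.

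\textbf{What your route buys.} It costs the one-step computation. With $q=|k|$ and $d=\dim\coker$, one gets $p(d\to d+1)=q^{-(2d+u+1)}$ and $p(d\to d-1)=(1-q^{-d})(1-q^{-(d+u)})$, which confirms your detailed-balance ratio. In return you get more:
\begin{enumerate}
\item the stationary law and reversibility over $k$ for free;
\item via irreducibility of the $k$-chain, the fact that $O_u(R)$ is hit almost surely from \emph{every} starting class. This is what lets you speak of positive recurrence of the chain rather than of a single state, and the atom argument makes sense even when $\Cr(R)$ is uncountable.
\end{enumerate}

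\textbf{Two corrections.}
\begin{enumerate}
\item It is the partial products of the detailed-balance ratios, not the ratios themselves, that must be summable.
\item The claim ``every state communicates with $O_u(R)$'' is false for infinite $R$. Over $\Z_p$ with $u=0$, the class of the $1\times 1$ zero matrix (cokernel $\Z_p$) leads to $O_0$ but is never reached from it, since Haar matrices over $\Z_p$ are almost surely nonsingular. What you actually prove, and what suffices, is this: every state leads to $O_u(R)$ almost surely, and the states reachable from $O_u(R)$ form a single positive recurrent class.
\end{enumerate}
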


We prove this by reducing to a question over the finite field $k$.
\begin{lemma}
\label{lem:recur}
$O_u(k)$ is a positive recurrent state for the Markov chain
\[
\Coker (Haar_{n,n+u}(k))
\]
\end{lemma}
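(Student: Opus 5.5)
We need to show that $O_u(k)$ is a positive recurrent state for the chain $\Coker(Haar_{n,n+u}(k))$ over the finite field $k$. Over a field the equivalence class of a matrix in $\Cr$ is determined by its cokernel (\autoref{thm:cokernel}), and $\coker(M)\cong k^{corank}$. So the state space is effectively $\mathbb{N}$, indexed by $d=\dim \coker(M)$. Here $O_u(k)$ is the state of minimal cokernel dimension, namely $d_0=\max(0,-u)$. The plan is to show that the chain on $d$ is a birth–death-type chain with a drift towards $d_0$, and then apply a standard criterion.

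\textbf{Step 1: transition probabilities.} By \autoref{cor:one} and \autoref{lem:ginvariance}, we may replace $M$ by a normal form: an identity block of rank $r$, padded with zeros, so that $\dim\coker = d$. Applying $\Delta_u$ has two parts.
\begin{itemize}
\item Adding a Haar random column raises the rank unless the column lies in the image. Given $d$, this happens with probability $q^{-d}$, where $q=|k|$.
\item Adding a random row then raises the rank with an analogous probability, computed from the dimension of the row space relative to the new number of columns.
\end{itemize}
The net change of the corank $d$ is therefore in $\{-1,0,+1\}$. I will write down $p(d\to d+1)$ and $p(d\to d-1)$ explicitly in terms of $q^{-d}$ and $q^{-(d+u)}$ (the cokernel dimension of the transpose being $d+u$). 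The expected shape is that
\[
p(d\to d-1)\approx (1-q^{-d})(1-q^{-d-u})
\]
is bounded below by a constant for $d>d_0$, while $p(d\to d+1)=O(q^{-2d-u})$.

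\textbf{Step 2: positive recurrence.} For a nearest-neighbour chain on $\{d_0,d_0+1,\dots\}$, positive recurrence is equivalent to summability of the detailed-balance weights
\[
\pi(d)=\prod_{j=d_0}^{d-1}\frac{p(j\to j+1)}{p(j+1\to j)}.
\]
Since the ratios decay like $q^{-2j-u}$, the sum converges. The chain is irreducible because all nearest-neighbour transitions have positive probability. This gives a stationary probability distribution, and hence positive recurrence of every state, in particular of $O_u(k)$.

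\textbf{Main obstacle.} The main difficulty is Step 1: getting the row-step probability correct, since it depends on the state after the column step. I will handle this by noting that $\coker(M^T)$ has dimension $d+u$, so the transpose picture is symmetric. The one place needing care is the boundary case $d=d_0$, where one of the two probabilities vanishes. Alternatively, if the exact formulas are messy, it suffices to bound $p(d\to d-1)\ge (1-q^{-1})^2$ for $d>d_0$ and $p(d\to d+1)\le q^{-1}$ for large $d$. This gives a Foster–Lyapunov drift condition with the function $V(d)=d$, which again yields positive recurrence.
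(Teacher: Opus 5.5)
Your argument is correct, but it follows a different route from the paper's.

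\textbf{The paper's route.} The paper never computes one-step transitions. It notes that $\prob(Haar_{n,n+u}(k)\in O_u(k))$ is the probability that a random $n\times(n+u)$ matrix over $k$ has full rank. This equals $\prod_{i=u+1}^{n+u}(1-q^{-i})$, or $\prod_{i=|u|+1}^{n}(1-q^{-i})$ when $u<0$, and it converges to $\prod_{i>|u|}(1-q^{-i})>0$. Since $n$-step probabilities into a transient or null-recurrent state tend to $0$, $O_u(k)$ is positive recurrent.

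\textbf{Your route.} Your Step 1 checks out exactly. With $q=|k|$ and $d\ge d_0$ the corank, $p(d\to d-1)=(1-q^{-d})(1-q^{-d-u})$ and $p(d\to d+1)=q^{-2d-u-1}$. So your weights satisfy $\pi(d)/\pi(d_0)=q^{-d(d+u)}$ times a bounded factor, and they are summable.

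\textbf{What each approach buys.} Yours gives more information: irreducibility, the nearest-neighbour structure (hence reversibility over $k$, as remarked in the introduction), and the whole stationary law. That law is the Cohen--Lenstra-type distribution, not just its mass at $O_u(k)$. The paper's argument is shorter, and the statement it proves, $\prob(\cdot\in O_u)\to c>0$, is exactly the form that transfers to $R$ via \autoref{lem:basechange:k}. Your version also transfers, because return times of the $R$-chain to $O_u(R)$ coincide pathwise with return times of the reduced chain to $O_u(k)$. You would need to state this explicitly.

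\textbf{A small slip.} In your Foster--Lyapunov fallback, the bound $p(d\to d+1)\le q^{-1}$ does not give negative drift when $q=2$, since $(1-q^{-1})^2=1/4<1/2$. Use instead that $p(d\to d+1)=q^{-2d-u-1}\to 0$.
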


\begin{proof}
We prove \autoref{lem:recur} using two standard computations over finite fields:

\begin{itemize}
\item If $u \geq 0$
\[ 
\prob(Haar_{n,n+u}(k) \in O_u(k)) =
\]
\[
\prob(
corank(Haar_{n,n+u}( k)) = 0
)
=
\prod_{i=u+1}^{n+u} \left(1 - \frac{1}{|k|^i}\right)
\]
\item If $u<0$
\[
\prob(Haar_{n,n+u}(k) \in O_u(k)) =
\]
\[
\prob(
corank(Haar_{n,n+u}(k)) = |u|
)
=
\prod_{i=|u|+1}^{n} \left(1 - \frac{1}{|k|^i}\right)
\]
\end{itemize}

Hence,
\[
\lim_{n\rightarrow \infty} \prob(Haar_{n,n+u}(k) \in O_u(k))
=
\prod_{i=|u|+1}^\infty \left(1 - \frac{1}{|k|^i}\right)
\]
\end{proof}

\autoref{thm:recur} now follows from \autoref{lem:recur} by \autoref{lem:basechange:k}.

\begin{corollary}
The operator $\Delta_u$ has a unique stationary measure and the distribution of $c_u \Big( Haar_{n,n+u}(R) \Big)$ converges to this stationary measure.
\end{corollary}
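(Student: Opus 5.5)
The plan is to read the corollary off the general theory of Markov chains on a countable state space, once we know that the chain $\Coker(Haar_{n,n+u}(R))$ is irreducible on the set of classes it actually visits and aperiodic there. \autoref{thm:recur} supplies positive recurrence.

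\emph{Countability.} First I would note that $\Cr$ is countable. By \autoref{thm:cokernel} it injects into isomorphism classes of $R$-modules. Every class is represented by some finite matrix, and for each fixed size the reduction of a matrix modulo $\mathfrak{m}^j$ is one of finitely many possibilities. The cleanest way to see that only countably many classes matter is to restrict attention from the start to the set $S$ of classes reachable from $O_u(R)$. The chain started at the distinguished class only ever visits $S$, and $S$ is countable because each step has at most countably many outcomes up to equivalence. I expect this to need a short argument, perhaps using that Haar-random entries can be approximated modulo $\mathfrak{m}^j$.

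\emph{Irreducibility and aperiodicity.} By \autoref{lem:basechange:k}, the state $O_u(R)$ is hit by $Haar_{n,n+u}(R)$ with probability bounded below, by the finite-field computation in \autoref{lem:recur}. Moreover, from any starting class $\Coker(M)$ the one-step transition $\Delta_u$ reaches $O_u(R)$ with positive probability. To see this, reduce modulo $\mathfrak{m}$ and use \autoref{lem:basechange:k}. Over $k$, iterating $\Delta_u$ a few times from any matrix reaches full rank with positive probability, since the appended rows and columns are uniform. So $O_u(R)$ is accessible from every state, and the states reachable from $O_u(R)$ form a single closed communicating class. Aperiodicity follows because $\Delta_u$ sends $O_u(R)$ to itself with positive probability: by \autoref{lem:deltar}, the new bottom-right entry becomes a Haar-random unit with positive probability. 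Hence $\Delta_u$ has a self-loop at $O_u(R)$.

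\emph{Conclusion and main obstacle.} Combining positive recurrence (\autoref{thm:recur}) with irreducibility on the closed class, the standard theorem gives a unique stationary probability measure supported on that class. Since every state leads to $O_u(R)$, any stationary measure must also be supported there, which gives uniqueness overall. Aperiodicity then gives convergence in distribution from any initial law, in particular from the initial matrix of \autoref{cor:one}.

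The main obstacle I anticipate is a measure-theoretic subtlety when $R$ is infinite profinite. A single step of $\Delta_u$ may have uncountably many possible outcomes as matrices, though not as classes. I would handle this by showing that the class of a matrix over $R$ depends only on the matrix modulo $\mathfrak{m}^j$ for some $j$ depending on the class, and I would expect to use finite presentation to prove it. If that fails, I would instead invoke the general Harris-chain theorem, with $O_u(R)$ serving as an atom.
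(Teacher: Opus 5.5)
Your outline matches the paper's argument. The corollary is read off from \autoref{thm:recur}, i.e.\ from positive recurrence of the single state $O_u(R)$, which the paper obtains from the finite-field computation of \autoref{lem:recur} via \autoref{lem:basechange:k}. Your accessibility and aperiodicity checks supply what the paper leaves implicit.

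The genuine gap is your countability step. It is false in the generality of the statement, and the paper explicitly wants higher-dimensional rings. Take $R=\Z_p[[x]]$ and $u=0$. By \autoref{lem:deltar}, one step of $\Delta_0$ from $O_0(R)$ is the class of $[f]$ with $f$ Haar random.
\begin{itemize}
\item For $a\in p\Z_p$ the classes of $[x-a]$ are pairwise distinct, because their cokernels $R/(x-a)$ have distinct annihilators.
\item Any fixed non-trivial class $\Coker[g]$ is hit with probability at most $\probP(f\in gR)=0$. Indeed, $gR$ is a closed subgroup of infinite index for a non-unit $g$, since $R/gR$ has Krull dimension at least $1$.
\end{itemize}
So the one-step law from $O_0(R)$ is $(1-\frac1p)\delta_{O_0(R)}$ plus a diffuse part. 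Your reachable set $S$ is uncountable, and no state other than $O_0(R)$ is ever hit with positive probability.

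Your proposed repair fails on the same example. We have $x-a\equiv x-b \pmod{\mathfrak m^j}$ whenever $a\equiv b \pmod{p^j}$, so the class of $[x-a]$ is not determined by its reduction modulo any $\mathfrak m^j$. Hence ``closed communicating class'' and the countable-state ergodic theorem are not available. Countability holds for finite $R$ or $\Z_p$, but not in general.

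What rescues the argument is your fallback, and it should be the main line: treat $\Cr$ as a general state space with $O_u(R)$ as an atom. The hypotheses are cleanly verified by reduction modulo $\mathfrak m$. The chain started at $\Coker(M)$ reduces to the finite-field chain started at $\Coker(\bar M)$, which is the rank-defect birth--death chain on $\mathbb{N}$. By \autoref{lem:basechange:k}, the hitting times of $O_u(R)$ and $O_u(k)$ coincide pathwise. This gives at once:
\begin{itemize}
\item $O_u(R)$ is hit almost surely from every state, not merely with positive probability.
\item Its mean return time is finite.
\item Together with $\probP(O_u\to O_u)>0$, it is aperiodic.
\end{itemize}
Note also that your ``one step'' claim is false when the rank defect of $\bar M$ is at least $2$, since each step lowers it by at most one.

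The standard theory of chains with an accessible, positive recurrent, aperiodic atom then applies (regeneration at the atom plus the renewal theorem, or a coupling). It gives a unique invariant probability and total-variation convergence of $\Coker(Haar_{n,n+u}(R))$ to it. This is also the reading under which the paper's brief deduction from \autoref{thm:recur} is justified for rings of Krull dimension at least $2$.
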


We denote this measure as $\mu_u$.

\paragraph{Explicit form of $\mu_\uparameter(R)$}
\begin{remark}
From the work of Sawin and Wood, \cite[Lemma 6.6, Lemma 6.7]{SawinWood}, it is possible to give explicit expressions for the measures $\mu_\uparameter(R)$, as a measure on modules, when $\uparameter \geq 0$. The general expression is given in \cite[Lemma 6.7]{SawinWood}. For the statement that their measure and our measure are the same, see the last line of the penultimate paragraph of \cite[page 48]{SawinWood}.
\end{remark}

\section{More random operators; the proof of \autoref{thm:maintheorem}}
\label{sec:randomoperators}

In this section, we complete the proof of \autoref{thm:maintheorem}. \autoref{thm:maintheorem} will follow by combining \autoref{cor:two} with \autoref{cor:one} and \autoref{thm:cokernel} in the previous section.

 Consider the random operators:
 \begin{equation}
 \label{eqn:defd}
 M
 \rightarrow
 \begin{mat}{ccc}
 && \\
 &M& \\
 && \\ \hline
 *&\hdots&*
 \end{mat}
 \text{ and }
 M
 \rightarrow
 \begin{mat}{ccc|c}
 &&&*\\
 &M&&\vdots \\
 &&&* \\
 \end{mat}
 \end{equation}

 \begin{theorem}
 The two operators in (\ref{eqn:defd}) descend to random operators:
 \[
 d_{\uparameter-1,\uparameter}: \CC_\uparameter \rightarrow \CC_{\uparameter-1}
 \text{ and }
 d_{\uparameter+1,\uparameter,}: \CC_\uparameter \rightarrow \CC_{\uparameter+1},
 \]
 respectively.
 \end{theorem}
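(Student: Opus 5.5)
The plan is to mirror the proof of \autoref{thm:deltar} for each of the two operators separately. The claim has two parts: the distribution of the class of the output depends only on $\Coker(M)$, and the class of the output lies in the stated $\CC_{\uparameter\pm1}$. The second part is immediate. Adding a row decreases $\#\text{cols}-\#\text{rows}$ by one, and adding a column increases it by one. So it remains to show that if $M_1\sim M_2$, then the random matrices obtained by appending a Haar random row (resp. column) induce the same distribution on classes. Since $\sim$ is generated by two moves, it suffices to check invariance under each of them: stabilization $M\mapsto \mathrm{diag}(M,\mathbf{I})$, and the action $M\mapsto g^TMg'$ with $g,g'\in GL(R)$.

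For the $GL$-action I would proceed as in \autoref{lem:ginvariance}. Consider the row operator. We have
\[
\begin{mat}{c} g^TMg' \\ \hline \mathbf{w}^T\end{mat}
=
\begin{mat}{c|c} g & 0\\ \hline 0 & 1\end{mat}^T
\begin{mat}{c} M \\ \hline \mathbf{w}^Tg'^{-1}\end{mat} g',
\]
and $\mathbf{w}^Tg'^{-1}$ is again Haar random by $GL_n(R)$-invariance of Haar measure on $R^n$. The column operator is handled symmetrically, absorbing $g^{-T}$ into the new column.

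For stabilization I would adapt \autoref{lem:deltar}, treating the row operator first. Appending a random row to $\mathrm{diag}(M,\mathbf{I})$ gives a last row of the form $(\mathbf{w}_1^T,\mathbf{w}_2^T)$, with $\mathbf{w}_2^T$ lying under the identity block. Using the identity block, column operations clear $\mathbf{w}_2^T$ without touching the other entries, since the identity block's columns are zero above it. We get
\[
\begin{mat}{cc} M & 0\\ 0 & \mathbf{I}\\ \mathbf{w}_1^T & 0\end{mat}.
\]
Permuting rows (a $GL$ operation) turns this into $\mathrm{diag}\Bigl(\begin{mat}{c} M\\ \mathbf{w}_1^T\end{mat},\mathbf{I}\Bigr)$, which is equivalent to $\begin{mat}{c} M\\ \mathbf{w}_1^T\end{mat}$. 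This is exactly the output of the row operator on $M$, with $\mathbf{w}_1$ Haar random. The column case is the transpose of the same argument, using row operations to clear the part of the new column beside the identity block.

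The only mildly delicate point is the bookkeeping for the general equivalence $\mathrm{diag}(M_1,\mathbf{I}_n)=g^T\mathrm{diag}(M_2,\mathbf{I}_m)g'$. I would chain the steps as follows: $M_1 \to \mathrm{diag}(M_1,\mathbf{I}_n)$ (stabilization invariance), then to $g^T\mathrm{diag}(M_2,\mathbf{I}_m)g'$ (an equality), then to $\mathrm{diag}(M_2,\mathbf{I}_m)$ ($GL$-invariance), and finally to $M_2$ (stabilization again). One also has to note that row and column permutations are among the allowed $g,g'$. No step seems a genuine obstacle. The main thing to watch is that, unlike in \autoref{lem:deltar}, there is no corner entry $c$ to absorb a correction term. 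The clearing step has to produce no correction, and it does, because the identity block is isolated by zeros.
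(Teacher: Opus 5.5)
Your proposal follows the paper's route. The paper's proof only says it is analogous to that of \autoref{thm:deltar}, i.e.\ $GL$-invariance as in \autoref{lem:ginvariance} plus invariance under stabilization as in \autoref{lem:deltar}. Your $GL$ computation, your chaining of the two moves, and your remark that the clearing step must produce no correction term (there is no corner entry) are all correct.

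There is one slip in the stabilization step: you have rows and columns interchanged. The entries $\mathbf{w}_2^T$ under $\mathbf{I}$ cannot be cleared by column operations without disturbing the other blocks. After the row is appended, the columns of the identity block themselves end in the entries $(\mathbf{w}_2)_j$.

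They are cleared by row operations instead. Subtract $(\mathbf{w}_2)_j$ times the $j$-th row $(\mathbf{0},\mathbf{e}_j^T)$ of the identity block from the new row. This uses that the block to the left of $\mathbf{I}$ is zero, and amounts to left multiplication by
\[
\begin{mat}{ccc} \mathbf{I} & 0 & 0 \\ 0 & \mathbf{I} & 0 \\ 0 & -\mathbf{w}_2^T & 1 \end{mat}.
\]
Dually, for the column operator, the part of the new column beside $\mathbf{I}$ is cleared by column operations, using the columns $(\mathbf{0};\mathbf{e}_j)$ of the identity block. This is where the vanishing of the block above $\mathbf{I}$, which you cited, is actually used. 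With rows and columns swapped back, your argument is complete.
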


 \begin{proof}
 The proof is analogous to the proof of \autoref{thm:deltar} above.
 \end{proof}

 \begin{definition}
 We can similarly define:
 \begin{equation}
 \label{eqn:defdrs}
 d_{r,u}: \, \CC_u \rightarrow \CC_r
 \end{equation}
 for any $u$ and $r$ in $\Z$. If $u>r$, then (\ref{eqn:defdrs}) is
 defined by adding $|u-r|$ random rows. If $u<r$, (\ref{eqn:defdrs})
 is defined by adding $|u-r|$ random columns. If $u=r$,
 (\ref{eqn:defdrs}) is the identity operator.
 \end{definition}

\paragraph{Identities satisfied by $d_{r,u}$}

\begin{theorem}
\label{thm:randomoperators}
The operators $d_{r,u}$ satisfy the following identites:
\begin{itemize}
\item[(A)] $d_{r,u} d_{u,s} = d_{r,s}$ whenever $\rparameter<\uparameter<s$ or $ \rparameter >\uparameter > s$
\item[(B)] $d_{u,r} d_{r,u} = \Delta_u^{|u-r|}$; in particular $d_{u,u+1}d_{u+1,u}=d_{u,u-1}d_{u-1,u}$
\item[(C)] $d_{r,u}$ is the adjoint of $d_{u,r}$, with respect to the measures $\mu_u$ and $\mu_r$.
\end{itemize}
\end{theorem}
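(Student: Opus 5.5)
The plan is to prove the three identities in order of difficulty. Identities (A) and (B) should hold already at the level of random matrices, before passing to classes. Identity (C) needs the stationary measures and a limiting argument along the Markov chain of \autoref{cor:one}.

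\textbf{(A).} Take $r<u<s$. Then $d_{u,s}$ adds $s-u$ independent Haar rows and $d_{r,u}$ adds $u-r$ further independent Haar rows. The composite adds $s-r$ independent Haar rows, which is exactly the random matrix defining $d_{r,s}$. The case $r>u>s$ is the same with columns in place of rows. The composite is well defined on classes because each factor is; this is the analogue of \autoref{thm:deltar} stated for $d_{u\pm1,u}$, obtained by iterating.

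\textbf{(B).} Take $r<u$ and set $k=u-r$. Then $d_{r,u}$ appends $k$ Haar rows to $M$, and $d_{u,r}$ appends $k$ Haar columns to the result. These columns run through the new rows, so the output is $M$ bordered by $k$ rows and $k$ columns, with every new entry an independent Haar variable, the $k\times k$ corner included. Applying $\Delta_u$ $k$ times produces the same border: each application adds one full row and one full column, all new entries independent and Haar. The two random matrices therefore have the same law, and hence they induce the same random operator on $\Cr$. The case $r>u$ is symmetric, with columns added first. The displayed special case is then the $|u-r|=1$ instance on both sides.

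\textbf{(C).} This is the main obstacle, and I would obtain it from exact finite-$n$ joint laws. Take $u<r$, put $k=r-u$, and let $A_n=Haar_{n,n+r}$. Let $X_n$ be $A_n$ with its last $k$ columns deleted, so $X_n=Haar_{n,n+u}$. Let $B_n=Haar_{n+k,n+r}$, which is $A_n$ with $k$ rows appended. Write $\mu_u^{(n)}$ for the law of $\Coker(Haar_{n,n+u})$. By definition of the operators,
\[
\prob(\Coker X_n=x,\ \Coker A_n=y)=\mu_u^{(n)}(x)\,d_{r,u}(x\to y),
\]
\[
\prob(\Coker A_n=y,\ \Coker B_n=x)=\mu_r^{(n)}(y)\,d_{u,r}(y\to x).
\]
By \autoref{thm:recur} and its corollary, the left-hand factors converge to $\mu_u(x)$ and $\mu_r(y)$. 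Adjointness is therefore equivalent to the following claim: the pairs $(\Coker X_n,\Coker A_n)$ and $(\Coker B_n,\Coker A_n)$ have asymptotically the same joint law on $\CC_u\times\CC_r$. The second pair sits inside $B_n=Haar_{n+k,n+r}$: its first coordinate is the whole matrix, and its second is the top $n$ rows.

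To prove the claim, I would exploit that both pairs are built from one infinite Haar matrix. Note that $B_n$ equals $X_n$ bordered by $k$ full Haar rows and columns, so $\Coker B_n=\Delta_u^k\Coker X_n$ by (B). I would then use the invariance argument of \autoref{lem:deltar} and \autoref{lem:ginvariance} to compare conditional laws. The goal is to show that conditioning $B_n$ on the class of its top $n$ rows, and conditioning $X_n$ on the class of its column-extension, give the same law in the limit. This uses stationarity of $\mu_u$ under $\Delta_u$ to absorb the index shift $n\mapsto n+k$.

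If this direct comparison resists, my fallback is transposition. The map $M\mapsto M^T$ sends $\Cr$ to $\CC_{-u}$ and swaps "add rows" with "add columns". So it suffices to prove (C) for one sign of $u-r$, say $k=1$, and then bootstrap to general $k$ using (A). This isolates the essential case of a single added column versus a single added row.
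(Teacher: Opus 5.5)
Your arguments for (A) and (B) are correct. They are what the paper means when it says these identities follow from the definitions.

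Your reformulation of (C) is also correct. With $X_n,A_n,B_n$ as you define them, (C) is equivalent to
$\lim_n \probP(\Coker B_n=x,\ \Coker A_n=y)=\lim_n \probP(\Coker X_n=x,\ \Coker A_n=y)$.
You have even set up the right object: $B_n$ versus its top $n$ rows. But this is only a restatement of (C). The paragraph meant to prove it gives no mechanism, so (C) is left unproved.

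Stationarity cannot be the missing input. From $\mu_u\Delta_u=\mu_u$ and the finite-$n$ identities you only get $\mu_u d_{r,u}=\mu_r$ and $\mu_r d_{u,r}=\mu_u$, which is much weaker than detailed balance. The transposition fallback does not help either. Statement (C) is already symmetric in $u$ and $r$, and reducing to $|r-u|=1$ via (A) leaves the essential case unproved.

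The missing idea, which is the content of Lemma~\ref{lem:rev}, is to turn \emph{deleting rows} into \emph{adding columns}.

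\textbf{Step 1.} By column operations, deleting the last $k$ rows of a matrix $B$ gives the same class as $\bigl[B\mid \begin{smallmatrix}\mathbf{0}\\ \mathbf{I}_k\end{smallmatrix}\bigr]$. So $\Coker A_n=\Coker\bigl[B_n\mid \begin{smallmatrix}\mathbf{0}\\ \mathbf{I}_k\end{smallmatrix}\bigr]$.

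\textbf{Step 2.} Conditionally on $\Coker B_n=x$, the law of $B_n$ is still invariant under left multiplication by $GL_{n+k}(R)$. So you may replace $B_n$ by $\mathcal{G}B_n$, with $\mathcal{G}$ an independent multiplicatively Haar element of $GL_{n+k}(R)$. This does not change the class, and it replaces $\begin{smallmatrix}\mathbf{0}\\ \mathbf{I}_k\end{smallmatrix}$ by the last $k$ columns of $\mathcal{G}$, independent of $B_n$.

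\textbf{Step 3.} By Lemma~\ref{lem:technical}, these columns are additively Haar conditioned on being independent modulo $\mathfrak{m}$. Hence they are within total variation $O(q^{-n})$ of $k$ independent Haar columns, where $q=|R/\mathfrak{m}|$.

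\textbf{Step 4.} It follows that $(\Coker B_n,\Coker A_n)$ is $o(1)$-close in total variation to $(\Coker B_n,\Coker[B_n\mid *])$. Since $B_n$ has the law of $X_{n+k}$, the latter pair has law $\mu_u^{(n+k)}(x)\,d_{r,u}(x\to y)$.

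\textbf{Step 5.} Let $n\to\infty$, using $\mu_u^{(n+k)}\to\mu_u$; this convergence, not stationarity, absorbs the index shift. The result is $\mu_r(y)\,d_{u,r}(y\to x)=\mu_u(x)\,d_{r,u}(x\to y)$, which is (C).
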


\begin{proof}
The first two properties follow immediately from the definitions. The last property is a consequence of \autoref{thm:rev} below.
\end{proof}

 \paragraph{Another Markov Chain}

 Given any $u\neq r$, the pair of operators
 \begin{equation}
 \label{eqn:drs}
 (d_{u,r}, d_{r,u})
 \end{equation}
 defines a Markov chain on the state space:
 \[
 \CC_u \cup \CC_r
 \]
 \begin{theorem}
 \label{thm:rev}
 The operator (\ref{eqn:drs}) defines a \textit{reversible} Markov chain on
 $\CC_u \cup \CC_r$.
 \end{theorem}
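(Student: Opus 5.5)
We need to show that the bipartite chain on $\CC_u \cup \CC_r$, which jumps from $\CC_u$ to $\CC_r$ via $d_{r,u}$ and back via $d_{u,r}$, is reversible. Equivalently, we need a measure $\nu$ on $\CC_u\cup\CC_r$ such that
\[
\nu(x)\,d_{r,u}(x\to y)=\nu(y)\,d_{u,r}(y\to x)\qquad\text{for all } x\in\CC_u,\ y\in\CC_r.
\]
We will take $\nu = \mu_u + \mu_r$ and prove detailed balance by realizing both sides as the probability of one and the same event for a single random matrix. Without loss of generality $r>u$, so $d_{r,u}$ adds $r-u$ Haar random columns and $d_{u,r}$ adds $r-u$ Haar random rows.

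\textbf{Step 1: a coupled pair.} Fix large $n$ and let $A=Haar_{n,n+u}$. Let $B$ be the $n\times(n+r)$ matrix obtained by appending $r-u$ Haar columns to $A$. Then $B$ is exactly $Haar_{n,n+r}$. By \autoref{cor:one} and the corollary of \autoref{thm:recur},
\[
\mathbb{P}(\Coker(A)=x,\ \Coker(B)=y)=\mathbb{P}(\Coker(A)=x)\,d_{r,u}(x\to y)\longrightarrow \mu_u(x)\,d_{r,u}(x\to y).
\]
Here we use that the conditional law of $\Coker(B)$ given $A$ depends only on $\Coker(A)$, which is \autoref{thm:deltar} adapted to $d_{r,u}$.

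\textbf{Step 2: reverse the roles.} Now read the same pair from the other side. Consider $B'=Haar_{n+(r-u),\,n+r}$, obtained by adding $r-u$ Haar rows to $B$. This is the reversed construction: starting from a matrix of class in $\CC_r$ and adding rows lands in $\CC_u$. The naive reversal fails, because $A$ is a submatrix of $B$ obtained by deleting columns, not adding rows. So we use transpose symmetry together with the infinite array. The pair $(Haar_{n,n+u},Haar_{n,n+r})$ differs by added columns, while the pair $(Haar_{n,n+r},Haar_{n+r-u,n+r})$ differs by added rows. We therefore need the joint law of the second pair to be the time-reversal of the first in the limit. The key point is the identity from Theorem \ref{thm:randomoperators}(B), $d_{u,r}d_{r,u}=\Delta_u^{r-u}$: the $\Delta$-chain moves diagonally, and in the full infinite Haar array the sequence $\Coker(Haar_{m,m+u})$ interleaved with $\Coker(Haar_{m,m+r})$ forms a single chain.

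\textbf{Step 3: stationary time reversal.} To obtain reversibility itself, we use the symmetry of the Haar array under the map $M\mapsto M^{T}$ combined with reindexing. The exchange of the role "add rows" versus "add columns" should correspond to $u\leftrightarrow -u$ under transpose. This is the main obstacle: transpose changes classes $\CC_u\to\CC_{-u}$, so it does not directly yield detailed balance. The first thing I would try is a direct invariance argument at a finite level instead of a limit. Take the $(n+r-u)\times(n+r)$ Haar matrix $H$ and compare two events:
\begin{itemize}
\item[(a)] the top $n$ rows together with the first $n+u$ columns, and the top $n$ rows with all columns;
\item[(b)] the top $n$ rows with all columns, and all of $H$.
\end{itemize}
Then use Lemma \ref{lem:deltar} to show that $H$ with $r-u$ extra rows is equivalent to $A$ padded by an identity block. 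After a $GL$ change of basis (Lemma \ref{lem:ginvariance}), $\Coker(H)$ and $\Coker(A)$ agree in distribution jointly with $\Coker(B)$. This exhibits an exchangeable triple. In the limit $n\to\infty$ it would give
\[
\mu_u(x)\,d_{r,u}(x\to y)=\mu_r(y)\,d_{u,r}(y\to x).
\]
That equation is detailed balance, and hence reversibility of the chain (\ref{eqn:drs}) with respect to $\mu_u+\mu_r$. Checking that this exchangeability genuinely holds, rather than merely equality of marginals, is where I expect the real work to lie.
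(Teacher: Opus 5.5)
Your Step 1 and your final target are correct. Take $A=Haar_{n,n+u}$, let $B=Haar_{n,n+r}$ be obtained from $A$ by adding $r-u$ columns, and let $H=Haar_{n+r-u,n+r}$ be obtained from $B$ by adding $r-u$ rows. Then detailed balance reduces, as $n\to\infty$, to the claim that $(\Coker(A),\Coker(B))$ and $(\Coker(H),\Coker(B))$ have asymptotically the same joint law. But that claim \emph{is} the theorem, and you do not prove it; you say so yourself.

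The lemmas you invoke do not supply it. Lemma~\ref{lem:deltar} is about absorbing an identity block when a random row \emph{and} a random column are added together. Lemma~\ref{lem:ginvariance} only shows that $\Delta_u$ is well defined on classes. Moreover, the claim as you phrase it, exact equality in law at finite $n$, is false, even for marginals. Summing over $y$ would give $\Coker(Haar_{n,n+u})\overset{d}{=}\Coker(Haar_{n+r-u,n+r})$. This already fails for $R=k$, $u=0$, $r=1$, since $\prod_{i=1}^{n}(1-|k|^{-i})\neq\prod_{i=1}^{n+1}(1-|k|^{-i})$. The transpose discussion and the ``exchangeable triple'' in Steps 2--3 play no role.

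The missing idea is the paper's Lemma~\ref{lem:rev}, which in your setup you need with rows and columns interchanged.
\begin{itemize}
\item Write $B=[A\mid B_2]$. Deleting the last $r-u$ columns of $B$ is the same, up to row operations, as appending the deterministic rows $[\,\mathbf{0}\mid\mathbf{I}_{r-u}\,]$ to $B$. So $\Coker(A)$ is the class of $B$ with these rows appended, while $\Coker(H)$ is the class of $B$ with $r-u$ Haar rows appended.
\item Conditionally on $\Coker(B)=y$, the law of $B$ is still invariant under right multiplication by $GL_{n+r}(R)$. So you may replace $[\,\mathbf{0}\mid\mathbf{I}_{r-u}\,]$ by $[\,\mathbf{0}\mid\mathbf{I}_{r-u}\,]\,\mathcal{G}_{n+r}(R)$, with $\mathcal{G}_{n+r}(R)$ independent of $B$.
\item By the row version of Lemma~\ref{lem:technical}, these rows are Haar distributed conditioned on being linearly independent modulo $\mathfrak{m}$. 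Hence they are within total variation distance $o(1)$ of independent Haar rows, uniformly in $y$. The error is the probability that $r-u$ Haar rows over $k$ are dependent, which tends to $0$ by the analogue of Lemma~\ref{lem:ffestimate}.
\item This gives $\probP(\Coker(A)=x\mid\Coker(B)=y)=d_{u,r}(y\to x)+o(1)$. Combine it with $\probP(\Coker(B)=y\mid\Coker(A)=x)=d_{r,u}(x\to y)$ via Bayes' rule and let $n\to\infty$. This yields $\mu_u(x)\,d_{r,u}(x\to y)=\mu_r(y)\,d_{u,r}(y\to x)$.
\end{itemize}
The paper runs this argument as stated, conditioning on the $\CC_u$-matrix instead of the $\CC_r$-matrix.
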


By taking the square of the operator (\ref{eqn:drs}), we get the following corollary of \autoref{thm:rev}:
 \begin{Corollary}
 \label{cor:two}
The operator $\Delta_\uparameter$ defines a \textit{reversible} Markov chain on
 $\CC_\uparameter$.
\end{Corollary}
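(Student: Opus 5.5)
The plan is to obtain \autoref{cor:two} from \autoref{thm:rev} by squaring, with $r=u+1$ (or $r=u-1$; either works). Write $P$ for the transition operator on $\CC_u \cup \CC_{u+1}$ given by the pair $(d_{u,u+1}, d_{u+1,u})$. A state in $\CC_u$ is sent by $d_{u+1,u}$ into $\CC_{u+1}$, and a state in $\CC_{u+1}$ is sent by $d_{u,u+1}$ back into $\CC_u$. So $P$ alternates between the two parts, and $P^2$ preserves each of $\CC_u$ and $\CC_{u+1}$. On $\CC_u$, the restriction of $P^2$ is the composite of these two operators, which by \autoref{thm:randomoperators}(B) is $\Delta_u$.

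The first step is to record what reversibility gives. By \autoref{thm:rev} there is a measure $\pi$ on $\CC_u \cup \CC_{u+1}$ with $\pi(x)P(x,z)=\pi(z)P(z,x)$ for all states $x,z$. For $x,y \in \CC_u$ I then compute
\[
\pi(x)P^2(x,y)=\sum_{z}\pi(x)P(x,z)P(z,y)=\sum_z \pi(z)P(z,x)P(z,y)=\sum_z P(z,x)\,\pi(y)P(y,z)=\pi(y)P^2(y,x),
\]
where the sums run over $z \in \CC_{u+1}$. These sums are countable, since the state spaces consist of isomorphism classes of finitely presented modules over a ring with finite residue field, and all terms are nonnegative, so the rearrangement is justified. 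Hence $\pi|_{\CC_u}$ satisfies detailed balance for $\Delta_u$.

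The remaining issue, which I expect to be the only real point to check, is that this reversing measure is the right one: it must be nonzero and finite on $\CC_u$, and it should coincide with $\mu_u$. Nonvanishing holds because $\pi$ is a nonzero measure and the chain alternates. If $\pi$ vanished identically on $\CC_u$, then detailed balance would force $\pi(z)P(z,x)=0$ for every $z$, and since each $d_{u,u+1}$ row is a probability measure this would give $\pi \equiv 0$. Once $\pi|_{\CC_u}$ is known to be a nonzero reversing measure, it is invariant for $\Delta_u$. The chain is irreducible on the class of $O_u(R)$, and \autoref{thm:recur} and the Corollary following it give uniqueness of the stationary measure, so $\pi|_{\CC_u}$ is a scalar multiple of $\mu_u$, in particular finite. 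Therefore $\Delta_u$ is reversible with respect to $\mu_u$, which is \autoref{cor:two}.

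If the proof of \autoref{thm:rev} instead produces the reversing measure explicitly as $\tfrac12(\mu_u+\mu_r)$, this last step is immediate and consistent with \autoref{thm:randomoperators}(C).
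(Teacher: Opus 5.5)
Your proposal is correct and follows the paper's route: square the alternating chain of \autoref{thm:rev} with $|r-u|=1$ and use \autoref{thm:randomoperators}(B); moreover, the paper's proof of \autoref{thm:rev} already gives detailed balance with respect to $\mu_u$ and $\mu_r$ themselves (this is \autoref{thm:randomoperators}(C)), so your fallback is the case that applies and the uniqueness argument can be dropped (uniqueness of the stationary probability measure would not by itself exclude an infinite reversing measure). One correction: $\CC_u$ need not be countable (over $R=\Z_p[[x]]$ the cokernels $R/(x-a)$, $a\in p\Z_p$, are pairwise non-isomorphic), but your rearrangement only uses nonnegativity, so the same computation holds as an identity of measures by Tonelli.
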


 We now prove \autoref{thm:rev}. Without loss of generality, suppose that
 \[r>u.\]

 \begin{lemma}
 \label{lem:rev}
 Suppose that $\probM$ is a $n \times n+\uparameter$ random matrix, whose
 distribution is invariant under the action of $GL_{n} \times
 GL_{n+\uparameter}$. Then the total variation distance between the distributions of:

\[
\Coker \halfrandommatrix{\probM} \hspace{0.25in}
\text{ and }  \hspace{0.25in}
\Coker \onedoublematrix{\probM}{\mathbf{0}}{\mathbf{I}_{r-u}} 
\]

 is bounded by $o(n)$.
 \end{lemma}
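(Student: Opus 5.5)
The plan is to compare the two random matrices directly. I read the first as $[\,\probM \mid X\,]$, where $X$ is an $n \times (r-u)$ block of independent Haar random entries, independent of $\probM$. I read the second as $[\,\probM \mid \begin{smallmatrix}\mathbf{0}\\ \mathbf{I}_{r-u}\end{smallmatrix}\,]$. Both lie in $\CC_r$. I would show they have the same class distribution after conditioning on a high-probability event depending only on $X$. This gives a total variation bound equal to the probability of the complement, which tends to $0$ as $n\to\infty$; I interpret the statement's $o(n)$ as $o(1)$ in $n$.

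\emph{Step 1: the good event.} Let $E$ be the event that $X \bmod \mathfrak{m}$ has full column rank $r-u$ over $k$. The reduction of $X$ is a uniform $n\times(r-u)$ matrix over $k$, so by the standard count already used in the proof of \autoref{lem:recur},
\[
\prob(E)=\prod_{i=n-(r-u)+1}^{n}\left(1-\frac{1}{|k|^{i}}\right).
\]
Hence $\prob(E^c)\le \sum_{i>n-(r-u)}|k|^{-i}=O(|k|^{-(n-r+u)})$, which tends to $0$.

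\emph{Step 2: normalizing $X$ on $E$.} On $E$, the columns of $X$ reduce to linearly independent vectors in $k^n$. By Nakayama's lemma ($R$ is local), they extend to a basis of $R^n$. So there is $h\in GL_n(R)$, depending measurably only on $X$ (choose it by a fixed rule), with $hX=\begin{smallmatrix}\mathbf{0}\\ \mathbf{I}_{r-u}\end{smallmatrix}$. Then
\[
h\,[\,\probM\mid X\,]=[\,h\probM \mid \begin{smallmatrix}\mathbf{0}\\ \mathbf{I}_{r-u}\end{smallmatrix}\,],
\]
and left multiplication by $h$ preserves the class in $\CC_r$ by the definition of the equivalence relation.

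\emph{Step 3: invariance.} Because $\probM$ is independent of $X$ and its law is invariant under $GL_n$, conditionally on $X$ (and on $E$) the matrix $h\probM$ has the same law as $\probM$. Therefore the conditional law of $\Coker[\,\probM\mid X\,]$ given $E$ equals the law of $\Coker[\,\probM \mid \begin{smallmatrix}\mathbf{0}\\ \mathbf{I}\end{smallmatrix}\,]$. Writing the first law as $\prob(E)\cdot(\text{conditional on }E)+\prob(E^c)\cdot(\text{conditional on }E^c)$, the total variation distance is at most $\prob(E^c)\to 0$.

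The main obstacle is making sure the hypothesis is used correctly. Independence of $\probM$ and the appended random block is implicit in the picture, since the $*$ entries are fresh Haar variables, and only $GL_n$-invariance of $\probM$ is needed. One should also check that the choice of $h$ can be made measurable, which is routine since $E$ is determined by the finite reduction of $X$. As a remark for the subsequent argument: column reducing with the identity block shows the second matrix is equivalent to $\probM$ with its last $r-u$ rows deleted. This is what will link $d_{r,u}$ to row deletion, and hence to the adjointness in \autoref{thm:rev}.
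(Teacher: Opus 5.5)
Your proof is correct. It rests on the same three ingredients as the paper's:
\begin{itemize}
\item left $GL_n$-invariance of $\probM$, together with its independence from the appended block;
\item the fact that the $GL_n(R)$-orbit of $E_0=\bigl(\begin{smallmatrix}\mathbf{0}\\ \mathbf{I}_{r-u}\end{smallmatrix}\bigr)$ is exactly the set of blocks of full $k$-rank (the paper's Lemma~\ref{sublem:orbit}, which you obtain from Nakayama);
\item the finite-field count.
\end{itemize}
The difference is that you run the coupling in the opposite direction.

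The paper randomizes the deterministic block. It multiplies by a Haar random $\mathcal{G}\in GL(R)$, which does not change the law of the class. It then has to identify the law of $\mathcal{G}E_0$ as the normalized restriction of additive Haar measure to full-rank tuples (Lemma~\ref{lem:technical}). That lemma is proved by orbit counting for finite $R$ and then passing to the limit over the quotients $R/\mathfrak{m}^j$. The bound is then the distance between a conditioned Haar measure and Haar measure.

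You instead de-randomize the random block. Conditioning on $E$ and applying a section $h(X)$ with $h(X)X=E_0$ gives exact equality of the conditional law. You therefore need neither Haar measure on $GL_n(R)$, nor the computation of its pushforward, nor the profinite limit.

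What each version buys: the paper's argument is choice-free and yields the exact law of $\mathcal{G}E_0$ as a by-product. Yours is more elementary, at the cost of a measurable choice of $h$. That choice is best justified concretely, since the fact that $E$ depends only on $X \bmod \mathfrak{m}$ is not by itself enough. Complete the columns of $X$ by standard basis vectors selected according to $X \bmod \mathfrak{m}$, and invert the resulting matrix. This makes $h$ continuous on the clopen set $E$.

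Both routes give the same bound, namely the probability that $X \bmod \mathfrak{m}$ has rank less than $r-u$. Incidentally, your dimension count is the correct one for the matrices as stated: columns of length $n$, the group $GL_n$, and $\prob(E)=\prod_{i=n-(r-u)+1}^{n}(1-|k|^{-i})$. The paper's text writes $GL_{n+u}$ and $(n+u)\times(r-u)$ at this point, which does not affect the conclusion.
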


\paragraph{Deduction of \autoref{thm:rev} from \autoref{lem:rev}}

First of all, observe that elementary column operations imply that:
\[
\onedoublematrix{\probM}{\mathbf{0}}{\mathbf{I}_{r-u}} 
\sim
\begin{mat}{ccc}
&& \\
&& \\
& 
\substack{
\text{ top left} 
\\
( n-(r-u)) \times (n+u) 
\\
\text{corner of $\probM$}
}
& \\
&& \\
&&
\end{mat}
\]

Now let $G_1 \in \CC_r$ and $G_2 \in \CC_u$. Take $\probM$ to be the random matrix $Haar_{n,n+u}$, conditioned on $\Coker \Big( Haar_{n,n+u} \Big) = G_2$. \autoref{lem:rev} implies the following equality:
\[
\probP \left(
\Coker(Haar_{n,n+r}) = G_1
\Big|
\Coker(Haar_{n,n+u}) = G_2
\right)=
\]

\[
=\probP \left(
\Coker(Haar_{n-(u-r),n+u}) = G_1
\Big|
\Coker(Haar_{n,n+u}) = G_2
\right)+o(n)
\]

By the Markov property, the left hand side is 
\[
\probP\left( G_2 \xrightarrow{d_{r,u}} G_1 \right)
\]
while the right hand side is 
\[
\probP
\left(
G_1 \xrightarrow{d_{u,r}} G_2
\right)
\frac{\probP(\Coker(Haar_{n+u,n+u-r})= G_1)}
{\probP(\Coker(Haar_{n+u,n}) = G_2)}
+o(n)
\]

Taking the limit $n \rightarrow \infty$ yields:
\[
\probP (G_2 \xrightarrow{d_{r,u}} G_1) \mu_u(G_1) =
\probP (G_1 \xrightarrow{d_{u,r}} G_2) \mu_r(G_2)
\]


 \subsection{The proof of \autoref{lem:rev}}

 As the statement is about the distribution induced on $\CC_r$, we can
 apply an element of $GL$ to any matrix.
 This will not change the distribution induced on $\CC_r$. In
 particular, we can apply a Haar random element of $GL_{n+u}(R)$,
 acting on the left.
 
 \begin{definition}
Let $\mathcal{G}_{n+u}(R)$ denote a multiplicatively Haar random element of $GL_{n+u}(R)$.
\end{definition}

 Thus, it suffices to bound the total variation distance between
 the following two random matrices, of dimension $(n+u) \times (r-u)$:
 \begin{equation}
\label{eqn:GHaar}
\mathcal{G}_{n+u}(R) \begin{mat}{ccc}
&&\\
&\mathbf{0}&\\
&&\\ \hline
&&\\
&\mathbf{I}_{r-u}& \\
&&\\
\end{mat} 
 \text{ and }
 \randommatrix
\end{equation}

\begin{remark}
Note that the column vectors the matrix on the left of (\ref{eqn:GHaar}) are the first $r-u$ column vectors of $\mathcal{G}$.
\end{remark}

 \begin{lemma}
 \label{lem:technical}
 The distribution of the last $r-u$ vectors of $\mathcal{G}_{n+u}(R)$ is
 the restriction of the additive Haar measure to the set of all $(r-u)$-tuples of vectors that have full $k$-rank.
 \end{lemma}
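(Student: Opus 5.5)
The plan is to prove \autoref{lem:technical} using the invariance of Haar measure on $GL_{n+u}(R)$ under right multiplication. Write $N = n+u$ and $s = r-u$. Let $X_s \subset (R^N)^s$ be the set of $s$-tuples of vectors whose reductions mod $\mathfrak{m}$ are linearly independent over $k$. Let $\pi : GL_N(R) \to (R^N)^s$ send a matrix to its last $s$ columns. The argument has four steps.

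\textbf{Step 1: the image of $\pi$ is exactly $X_s$.} A square matrix over $R$ is invertible if and only if its reduction mod $\mathfrak{m}$ is invertible, as used in \autoref{lem:basechange:k}. So the columns of an invertible matrix reduce to a $k$-basis, and any $s$ of them lie in $X_s$. Conversely, let $(v_1,\dots,v_s) \in X_s$. Extend the reductions $\bar v_i$ to a $k$-basis of $k^N$, and lift the added vectors arbitrarily to $R^N$. The resulting $N\times N$ matrix has invertible reduction, so it lies in $GL_N(R)$. Hence $X_s$ is a single orbit of the left action of $GL_N(R)$ on $(R^N)^s$.

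\textbf{Step 2: $\pi_*(\text{Haar})$ is $GL_N(R)$-invariant.} We have $\pi(hg) = h\,\pi(g)$. Multiplicative Haar measure on the compact group $GL_N(R)$ is left invariant, so the pushforward $\pi_*(\text{Haar})$ is a probability measure on $X_s$ invariant under the left action of $GL_N(R)$.

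\textbf{Step 3: the restricted additive Haar measure is also invariant.} First, $X_s$ has positive additive Haar measure. It is open and closed, being the preimage of a subset of $(k^N)^s$, and it is nonempty. Next, left multiplication by $h \in GL_N(R)$ is an additive automorphism of $(R^N)^s$, and it preserves additive Haar measure. This is the $GL_n(R)$-invariance already used in \autoref{lem:ginvariance}. Hence the normalized restriction $\nu$ of additive Haar measure to $X_s$ is a $GL_N(R)$-invariant probability measure on $X_s$.

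\textbf{Step 4: uniqueness, which is the main obstacle.} It remains to show that a transitive compact group action has only one invariant probability measure. For profinite $R$, the cleanest route is to reduce to finite levels. Take a basis of open ideals $I$ with $R/I$ finite; the cylinder sets of the quotients $(R/I)^{Ns}$ generate the Borel $\sigma$-algebra. So it suffices to check equality of the two measures after pushing forward to $(R/I)^{Ns}$.

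At a finite level, $GL_N(R)$ surjects onto $GL_N(R/I)$. This holds because $R/I$ is local and invertibility is detected mod $\mathfrak{m}$: any lift of an invertible matrix over $R/I$ is invertible over $R$. By the argument of Step 1 applied over $R/I$, the image of $X_s$ is a single orbit under $GL_N(R/I)$, which is a finite group. Both pushforward measures are invariant probability measures on this finite transitive set, so both are uniform there, and therefore they coincide.

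Finally, the statement concerns the last $s$ columns, while \eqref{eqn:GHaar} uses the first $r-u$ columns. This makes no difference: permuting columns, that is, right multiplication by a permutation matrix, preserves multiplicative Haar measure.
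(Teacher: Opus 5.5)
Your proof is correct and follows essentially the paper's route: $GL_N(R)$ acts transitively on the $k$-independent $s$-tuples (this is the paper's Sublemma, which you actually prove), the measures are uniform on the resulting finite orbits at each finite quotient, and one then passes to the profinite limit. Your framing via invariance of both measures, together with your explicit use of the surjectivity $GL_N(R)\to GL_N(R/I)$ and of arbitrary open ideals $I$ rather than powers of $\mathfrak{m}$, makes precise the limiting step that the paper compresses into one line.
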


 \begin{proof}
 
 For the proof of \autoref{lem:technical}, we will use the following standard fact, that we give without proof.
 \begin{sublemma}
 \label{sublem:orbit}
 Let $N$ be any natural number. Every $(r-u)$-tuple of vectors in $R^N$ that are $k$-independent, can be completed to an $N$-tuple of $k$-independent vectors.
\end{sublemma}

\paragraph{Proof of \autoref{lem:technical} when $R$ is finite:} Suppose $R$ is finite. Then $GL_{n+u}(R)$ is finite. Applying a uniformly random element of $GL_{n+u}(R)$ to
 \begin{equation}
 \label{eqn:iomatrix}
 \begin{mat}{ccc}
&&\\
&\mathbf{0}&\\
&&\\ \hline
&&\\
&\mathbf{I}_{r-u}& \\
&&\\
\end{mat} 
\end{equation}
 produces a uniformly random element in the orbit of $GL_{n+u}(R)$ acting on (\ref{eqn:iomatrix}). By \autoref{sublem:orbit}, the orbit of $GL_{n+u}(R)$ acting on (\ref{eqn:iomatrix}) consists of all $(r-u)$-tuples of $k$-independent vectors in $R^N$. Hence, we have proven \autoref{lem:technical} for finite $R$
 
\paragraph{Proof of \autoref{lem:technical} when $R$ is infinite:} Now suppose that $R$ is not finite. Then, by the preceding argument, we know that \autoref{lem:technical} holds modulo any power of the maximal ideal of $R$. Because $R$ is profinite, it also holds that \autoref{lem:technical} is true for $R$.

 \end{proof}

Thus, the total variation distance between the two random matrices is
twice the probability that $r-u$ Haar random vectors over $k$ are \textit{not} independent.

\begin{lemma}
\label{lem:ffestimate}
The following expression gives the probability that the rank of an $(r-u) \times (n+u) $ Haar random matrix over $k$ is less than $(r-u)$.
\[
1-\prod_{i=1}^{r-u} \left( 1 - \frac{1}{|k|^{n+u-i+1}} \right) = o(n)
\]
\end{lemma}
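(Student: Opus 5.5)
We need the probability that an $(r-u)\times(n+u)$ uniformly random matrix over the finite field $k$ has full row rank $r-u$, and then the fact that the complement tends to $0$ as $n\to\infty$. The notation $o(n)$ in the statement should be read, as elsewhere in this section, as a quantity tending to zero as $n \to \infty$ with $r,u$ fixed. The plan is the standard row-by-row count, followed by an elementary estimate of the product.

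\textbf{Step 1 (exact formula).} Write $q=|k|$, $N=n+u$ and $m=r-u$; we may assume $N \geq m$. The rows $\mathbf{w}_1,\dots,\mathbf{w}_m$ are independent and uniformly distributed in $k^N$. The matrix has rank $m$ exactly when, for each $i$, the row $\mathbf{w}_i$ avoids the span of $\mathbf{w}_1,\dots,\mathbf{w}_{i-1}$. Condition on the first $i-1$ rows being independent. Their span then has $q^{i-1}$ elements, so the conditional probability that $\mathbf{w}_i$ lies outside it is
\[
1-\frac{q^{i-1}}{q^{N}} = 1-\frac{1}{q^{N-i+1}}.
\]
Multiplying these conditional probabilities over $i=1,\dots,m$ gives the probability of full rank, $\prod_{i=1}^{m}\left(1-q^{-(N-i+1)}\right)$. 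Taking the complement yields the stated expression.

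\textbf{Step 2 (the estimate).} We use the elementary inequality $1-\prod_i(1-a_i)\le \sum_i a_i$ for $a_i\in[0,1]$, which follows by induction on the number of factors, or equivalently from the union bound over the events $\mathbf{w}_i\in\mathrm{span}(\mathbf{w}_1,\dots,\mathbf{w}_{i-1})$. This gives
\[
1-\prod_{i=1}^{m}\left(1-\frac{1}{q^{N-i+1}}\right)\le \sum_{i=1}^{m} q^{-(N-i+1)} \le \frac{m}{q^{N-m+1}} = \frac{r-u}{|k|^{n+2u-r+1}}.
\]
Since $r$ and $u$ are fixed, this bound tends to $0$, in fact exponentially fast in $n$.

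There is no real obstacle. The only point needing care is the conditioning in Step 1: the conditional probability is well defined because it depends only on the dimension $i-1$ of the span, and not on which particular independent rows were drawn. The same estimate applies to the columns of the matrix in (\ref{eqn:GHaar}) after reduction modulo $\mathfrak{m}$. Indeed, the Haar measure on $R$ pushes forward to the uniform measure on $k$, so full $k$-rank is governed by the finite-field computation above.
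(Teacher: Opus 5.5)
Your proof is correct and is essentially the argument the paper intends: the paper only says this is ``a standard calculation over finite fields,'' and your row-by-row count followed by the union bound $\sum_{i=1}^{r-u}|k|^{-(n+u-i+1)}\le (r-u)|k|^{-(n+2u-r+1)}$ carries out that calculation. Your reading of $o(n)$ as a quantity tending to $0$ (in fact exponentially fast) is also what the proof of Theorem~\ref{thm:rev} uses when it passes to the limit $n\to\infty$.
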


\begin{proof}
This is a standard calculation over finite fields.
\end{proof}

Combining \autoref{lem:ffestimate} and \autoref{lem:technical} shows that the total variation distance between the random matrix distributions (\ref{eqn:GHaar}) is $o(n)$. By the discussion preceding (\ref{eqn:GHaar}), we conclude \autoref{lem:rev}.


\section{Relation with operators defined in \cite{arxiv}}
\label{sec:relationwitharxiv}

In previous work, we defined the operators:
\begin{equation}
d, \, d^{*}
\end{equation}
as certain explicit random operators on groups. Below, we give the
relation of these operators to the ones studied in this paper.

\begin{definition}
We say a matrix over $\Z_p$ is singular if there is a non-trivial
relation between the column vectors of $M$, and a non-trivial relation
between the row vectors of $M$. If a matrix is singular, then every
matrix in its equivalence class in $\CC_u$ is also singular. 
\end{definition}

\begin{theorem}
\label{thm:rel}
We have the following relations, when $R=\Z_p$:
\begin{itemize}
\item The restriction of $d_{-1,0}$ to non-singular equivalence classes
coincides with the operator $d^{*}$, defined in \cite{arxiv}.
\item The restriction of $d_{0,-1}$ to non-singular equivalence classes
coincides with the operator $d$, defined in \cite{arxiv}.
\item Hence, the operator $\Delta_0 \defeq d_{0,-1} d_{-1,0}$, defined
above, coincides with the operator $d d^{*}$, when restricted to
non-singular equivalence classes. (In \cite{arxiv}, the operator $d
d^{*}$ is also denoted by the symbol " $\Delta_0$".)
\end{itemize}
\end{theorem}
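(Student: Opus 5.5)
The plan is to compute, for each of the two operators, what happens to the cokernel on the level of modules. Since $\coker$ is injective on $\CC_u$ by \autoref{thm:cokernel}, it is enough to identify the pushforward measures on $\Z_p$-modules. The resulting description is then compared with the explicit description of $d^{*}$ and $d$ in \cite{arxiv}, namely steps (A)+(B) and step (C) of the introduction. The third item is formal: $\Delta_0 = d_{0,-1}d_{-1,0}$ by \autoref{thm:randomoperators}(B), so it follows from the first two. One also has to check that non-singularity is preserved, so that the composite stays inside the domain where both identifications hold.

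\textbf{Step 1: the operator $d_{0,-1}$.} Let $N$ be an $(n+1)\times n$ matrix representing a non-singular class in $\CC_{-1}$. Then $N$ has full column rank, so $\coker(N)$ is a finitely generated $\Z_p$-module of rank $1$. Appending a Haar random column $\mathbf{v}\in\Z_p^{n+1}$ adds one relation. The new cokernel is therefore $\coker(N)/\langle \bar{\mathbf{v}}\rangle$, where $\bar{\mathbf{v}}$ is the image of $\mathbf{v}$ under the surjection $\Z_p^{n+1}\to\coker(N)$. The pushforward of additive Haar measure under a surjective homomorphism of profinite groups is Haar measure. Hence $\bar{\mathbf{v}}$ is a Haar random element of $\coker(N)\cong G\times\Z_p$, and this is exactly step (C), i.e.\ the operator $d$ of \cite{arxiv}. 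This is the first fact in the remark on the Markov property.

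\textbf{Step 2: the operator $d_{-1,0}$.} Let $M$ be a non-singular $n\times n$ matrix, so $\det M\neq 0$ and $G=\coker(M)$ is a finite abelian $p$-group. Appending a Haar random row $\mathbf{w}^T$ gives $N=\begin{mat}{c} M\\ \mathbf{w}^T\end{mat}$. Projection onto the first $n$ coordinates induces a surjection $\coker(N)\to\coker(M)=G$. Its kernel is generated by the image of $e_{n+1}$, and this image is torsion free: if $a e_{n+1}=N\mathbf{x}$, then $M\mathbf{x}=0$, so $\mathbf{x}=0$ by non-singularity, and hence $a=0$. We therefore obtain an extension
\[
0\to\Z_p\to\coker(N)\to G\to 0 .
\]
I would then show that the extension class in $\mathrm{Ext}^1(G,\Z_p)$ is the image of $\mathbf{w}$ under the connecting map of the presentation $\Z_p^n\xrightarrow{M}\Z_p^n\to G$. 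Concretely, $\mathrm{Ext}^1(G,\Z_p)=\mathrm{Hom}(\Z_p^n,\Z_p)/M^T\mathrm{Hom}(\Z_p^n,\Z_p)$, and the class is the image of $\mathbf{w}^T$. This map is a surjective homomorphism, so a Haar random $\mathbf{w}$ gives a uniformly random extension of $G$ by $\Z_p$. By the footnote to steps (A)--(B), which relies on \cite[Lemma 1.3]{arxiv}, this is precisely $d^{*}$.

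\textbf{Main obstacle.} The delicate point is matching conventions with \cite{arxiv}. There, a "uniform extension" is counted up to isomorphism of the middle term weighted in a particular way, and I must verify that "uniform on $\mathrm{Ext}^1$" is the measure meant there. I would check this by reproducing the direct computation: first push $\mathbf{w}$ forward to a uniform element of $\mathrm{Hom}(G,\mathbb{Q}_p/\Z_p)\cong G^\vee$, which is (A) after dualizing, and then form the product with $\Z_p$, which is (B). A second, minor point is that after $d_{-1,0}$ the class lies in $\CC_{-1}$ with torsion-free rank exactly $1$. The output is then automatically non-singular in the sense needed for Step 1, which justifies composing the two identifications in the third item.
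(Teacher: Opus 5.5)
Your argument is correct, but it does not follow the paper's route. The paper reduces the theorem to two module-level identities: $d^{*}(\coker M)$ is the cokernel of $M$ with a Haar random row appended, for non-singular square $M$, and $d(\coker M)$ is the cokernel of $M$ with a Haar random column appended, for non-singular $(n+1)\times n$ $M$. It then cites \cite[Theorem 1.1]{arxiv} for both and does no computation.

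You prove these identities directly instead. Appending a column is a quotient by the image of a Haar vector, and that image is Haar on the cokernel, which is step (C). Appending a row to a non-singular square $M$ gives an extension of $G$ by $\Z_p$. Its class is the image of $\mathbf{w}$ in $\Z_p^n/M^T\Z_p^n\cong\mathrm{Ext}^1(G,\Z_p)$, so it is uniform, and you then match this with (A)+(B).

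The paper's version is a one-liner, but it leaves the agreement unexplained and rests entirely on the explicit formulas of \cite{arxiv}. Yours explains why the operators agree. From \cite{arxiv} it needs only the elementary description of $d$ as (C) and $d^{*}$ as (A)+(B); you still have to take that from there, since that is where $d,d^{*}$ are defined. You also check explicitly that $d_{-1,0}$ preserves non-singularity, which the ``Hence'' in the third item tacitly needs.

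In your convention check, state the key fact explicitly. The extension attached to $\chi\in\mathrm{Hom}(G,\mathbb{Q}_p/\Z_p)$ has middle term $\ker\chi\oplus\Z_p$, and $\ker\chi\cong(G^\vee/\langle\chi\rangle)^\vee$ has the law of $G/\langle g\rangle$ for uniform $g\in G$.

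You can also bypass $\mathrm{Ext}$ entirely. For $N$ the matrix $M$ with the row $\mathbf{w}^T$ appended, Smith normal form gives $\coker(N)\cong\coker(N^T)\oplus\Z_p$. Here $\coker(N^T)=\coker(M^T)/\langle\bar{\mathbf w}\rangle$ with $\bar{\mathbf w}$ uniform in $\coker(M^T)\cong G$. This is literally (A) followed by (B), and it is the second fact in the paper's remark on the Markov property.
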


\begin{remark}
The restriction to non-singular equivalences classes is not a serious
restriction for the study of Haar random matrices over $\Z_p$, as in
this case, the singular matrices are contained in a set of measure $0$.
\end{remark}

\begin{proof}
To prove \autoref{thm:rel}, it suffices to show that 
\[
d^{*} \Big( \coker(M) \Big) =
\coker
\begin{mat}{ccc}
&& \\
&M& \\
&& \\ \hline
*&\hdots&*
\end{mat}
\]
for all non-singular $n \times n $ matrices $M$ over $\Z_p$ and 
\[
d \Big( \coker(M) \Big)
=
\coker 
\begin{mat}{ccc|c}
&&&*\\
&M&&\vdots\\
&&&*\\
\end{mat}
\]
for all non-singular $(n +1) \times n$ matrices $M$ over $\Z_p$.
But this is shown in \cite[Theorem 1.1]{arxiv}.
\end{proof}

\section{Other comments}
\label{sec:othercomments}

\begin{remark}
Any ring homomorphism $R \rightarrow R/I$ induces maps:
\begin{equation}
\label{eqn:modImap}
red_I: \, C_\uparameter (R) \rightarrow C_\uparameter (R/I)
\end{equation}
We will call this the \textit{reduction} map. The push-forward of reduction induces an operator from measures on $C_\uparameter (R)$ to measures on $C_\uparameter (R/I)$. Moreover, we can define
the adjoint of reduction, with respect to the measures $\mu_\uparameter(R)$ and
$\mu_\uparameter(R/I)$. The adjoint of reduction will be a random operator:
\[
ind_I: \, C_\uparameter(R/I) \rightarrow C_\uparameter(R)
\] 
We will call this the \textit{induction} operator. We will define a more explicit operator in the following lemma; we will then show that this explicit operator coincides with the induction operator. 
\end{remark}

\begin{lemma}
\label{lem:inductionop}
The following is a well-defined random operator from $C_u(R/I)$ to $C_u(R)$.
Given a matrix $M$ over $R/I$, replace every entry of $M$ by an independent Haar random element of its $I$-coset in $R$.
\end{lemma}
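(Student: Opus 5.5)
We have to show two things. First, the random class of the lifted matrix $\tilde M$ in $C_u(R)$ depends only on the class $\Coker(M) \in C_u(R/I)$, not on the representative $M$. Second, the operator so obtained sends probability measures to probability measures. The second point is immediate: for each fixed $M$ the construction produces a probability distribution on matrices over $R$, hence on $C_u(R)$. Linearity is then just extension to mixtures. So the content is the well-definedness, and we prove it the same way as \autoref{thm:deltar}, by checking invariance under the two generating moves of the equivalence relation (\ref{eqn:eqrelation}).

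\emph{Adding an identity block.} Suppose we pass from $M$ to $\begin{mat}{c|c} M & 0 \\ \hline 0 & \mathbf{I}_n \end{mat}$ over $R/I$. The lift of this matrix has the block form
\[
\begin{mat}{c|c} \tilde M & A \\ \hline B & \mathbf{I}_n + D \end{mat},
\]
where $A$, $B$, $D$ are independent and Haar random with entries in $I$. We want this to be equivalent to $\tilde M$.

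The argument runs as follows.
\begin{itemize}
\item Since $I \subseteq \mathfrak{m}$ (we assume $I$ is proper), the block $\mathbf{I}_n + D$ reduces to the identity mod $\mathfrak{m}$, so it is invertible.
\item Block row and column operations clear $A$ and $B$. This replaces the matrix by $\tilde M - A(\mathbf{I}+D)^{-1}B$ in the top-left corner, together with an invertible lower block.
\item The invertible lower block can be normalized to $\mathbf{I}_n$.
\item The correction $A(\mathbf{I}+D)^{-1}B$ has entries in $I$ and is independent of $\tilde M$. The entries of $\tilde M$ are independent Haar random elements of their $I$-cosets, so translation invariance of Haar measure on $I$ (the analogue of the Claim in the proof of \autoref{lem:deltar}) shows that $\tilde M - A(\mathbf{I}+D)^{-1}B$ has the same law as $\tilde M$.
\end{itemize}
Hence this move does not change the induced distribution on $C_u(R)$.

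\emph{The $GL$ action.} Now suppose we pass from $M$ to $\bar g^T M \bar g'$ with $\bar g, \bar g' \in GL(R/I)$. First lift $\bar g, \bar g'$ to $g, g' \in GL(R)$. Lifts exist because any lift of an invertible matrix is invertible, again since its reduction mod $\mathfrak{m}$ is invertible.

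Write $\tilde M = M_0 + E$, where $M_0$ is a fixed lift of $M$ and $E$ is Haar random on $\text{Mat}(I)$. Then
\[
g^T \tilde M g' = g^T M_0 g' + g^T E g'.
\]
Here $g^T M_0 g'$ is a fixed lift of $\bar g^T M \bar g'$. The map $E \mapsto g^T E g'$ is an additive automorphism of the compact group $\text{Mat}(I)$, so it preserves Haar measure, and $g^T E g'$ is again Haar random on $\text{Mat}(I)$. Therefore $g^T\tilde M g'$ has exactly the law of a lift of $\bar g^T M \bar g'$. Since $g^T\tilde M g'$ and $\tilde M$ lie in the same class, the induced distribution on $C_u(R)$ is unchanged.

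The dimension difference $u$ is clearly preserved by both moves, so combining them gives the lemma.

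\emph{Main obstacle.} The step needing the most care is the Haar-measure bookkeeping in the profinite case. One must check that "Haar random element of its $I$-coset" is meaningful, and that $E \mapsto g^T E g'$ preserves the measure. This requires $I$ to be a closed ideal, so that $I$ is a compact subgroup carrying its own Haar measure. I would take $I$ closed, and if necessary argue modulo $\mathfrak{m}^j$ and pass to the limit, as in the proof of \autoref{lem:technical}.
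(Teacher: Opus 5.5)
Your proposal is correct and follows the same route as the paper, whose proof says only that the argument is analogous to that of \autoref{thm:deltar}. That argument is invariance under adding an identity block (block elimination plus translation invariance of Haar measure, as in \autoref{lem:deltar}) together with invariance under the $GL$-action (as in \autoref{lem:ginvariance}); your write-up fills in details the paper omits, notably that every lift of an element of $GL(R/I)$ is invertible because $I \subseteq \mathfrak{m}$, and that $I$ must be closed for Haar measure on its cosets to make sense.
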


\begin{Remark}
\label{rem:adj}
On matrices equipped with the Haar measure, this operator is the adjoint of reduction modulo $I$.
\end{Remark}

\begin{proof}
By arguments similar to those in the proof of \autoref{thm:deltar}, we can
verify that this induces a random operator from $C_\uparameter(R/I)$ to
$C_\uparameter(R)$. 
\end{proof}

\begin{theorem}
\label{thm:opcommutativity}
The operator $red_I$, and the operator defined in \autoref{lem:inductionop} commute with the operators $d_{\uparameter,\rparameter}$.
\end{theorem}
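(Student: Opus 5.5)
We prove \autoref{thm:opcommutativity} at the level of random matrices: we show that the two random-matrix operations in each composition produce the same distribution, and only then pass to equivalence classes. Since $d_{r,u}$ is defined by adjoining $|u-r|$ Haar random rows or columns, by (A) of \autoref{thm:randomoperators} it is enough to treat $d_{u-1,u}$ (adjoin one Haar random row) and $d_{u+1,u}$ (adjoin one Haar random column). The two cases are symmetric under transposition, so we only write out the column case.

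\emph{Reduction.} Let $M$ be a matrix over $R$ representing a class in $\CC_u(R)$. Apply $d_{u+1,u}$ and then $red_I$. This gives the matrix $M \bmod I$ with a column $\mathbf{v} \bmod I$ adjoined, where $\mathbf{v}$ is Haar random on $R^{n}$. The key point is that the push-forward of the Haar measure on $R$ under $R \to R/I$ is the Haar measure on $R/I$. For finite $R/I$ this holds because the push-forward is translation invariant. In general, $I$ is a closed ideal of the profinite ring $R$, and one checks the statement modulo each open ideal containing $I$. Since the entries are independent, $\mathbf{v} \bmod I$ is Haar random on $(R/I)^n$. Hence the result coincides with first reducing and then applying $d_{u+1,u}$ over $R/I$. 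Because $red_I$ is a well-defined map on classes and $d_{u+1,u}$ is well defined on classes over both rings, the identity descends to $\CC_u$.

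\emph{Induction.} Let $M$ be over $R/I$, and write $ind$ for the operator of \autoref{lem:inductionop}. Consider $ind$ followed by $d_{u+1,u}$. We obtain the matrix $\tilde M$ with a Haar random column $\mathbf{v}\in R^n$ adjoined, where $\tilde M$ is an entrywise independent Haar lift of $M$ and $\mathbf{v}$ is independent of $\tilde M$. Now consider $d_{u+1,u}$ followed by $ind$. We first adjoin a Haar random column $\bar{\mathbf{v}}$ over $R/I$, then lift every entry independently and Haar randomly within its $I$-coset.

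To compare the two, we need the following statement: choosing $\bar x$ Haar random in $R/I$ and then a Haar random point of the coset $\bar x + I$ yields a Haar random element of $R$. This is the disintegration of the Haar measure on $R$ along the fibres of $R\to R/I$. By uniqueness of Haar measure, it follows from translation invariance of the composite measure, and this invariance is easy to check coset by coset. Given this statement, the lifted new column is Haar random on $R^n$ and independent of the lift of $M$. So the two random matrices have the same law, and the identity descends to classes by \autoref{lem:inductionop}.

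We expect the main obstacle to be measure-theoretic bookkeeping rather than algebra. One must make sense of the Haar measure on a coset $\bar x + I$ when $I$ is closed but not open; this is the translate of the Haar measure on the compact group $I$. One must also verify that the disintegration statement holds in the profinite setting. We would handle both points by working modulo $\mathfrak{m}^j + I$ for all $j$, where everything is finite, and passing to the limit exactly as in the proof of \autoref{lem:technical}. A second point to watch is that for general $d_{r,u}$ the composition is built from single steps of different indices $u$. The single-step commutations must therefore be proved for all $u$ simultaneously, and this is immediate from the argument above because it nowhere used the value of $u$.
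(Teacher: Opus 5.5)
Your proposal is correct and is essentially the paper's argument. The paper's proof is only the remark that the commutativity can be checked on the level of matrices, and you have supplied the facts that make this check work: Haar measure on $R$ pushes forward to Haar measure on $R/I$, a coset-wise Haar lift of a Haar random element of $R/I$ is Haar random on $R$, and (A) reduces everything to single-step operators.
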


\begin{proof}
The commutativity can be verified on the level of matrices.
\end{proof}

\begin{corollary}
The operator defined in \autoref{lem:inductionop} is the same as the operator $ind_I$.
\end{corollary}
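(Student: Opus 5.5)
The plan is to verify the defining adjointness relation directly. For all classes $A \in C_\uparameter(R/I)$ and $B \in C_\uparameter(R)$ we want
\[
\mu_\uparameter(R/I)(A)\, \probP\big(A \xrightarrow{\text{ind}} B\big) = \mu_\uparameter(R)(B)\, \probP\big(B \xrightarrow{red_I} A\big),
\]
where "ind" is the explicit operator of \autoref{lem:inductionop}. We obtain this as the $n \to \infty$ limit of the same identity at finite $n$, realized by a single coupling of Haar matrices over $R$ and over $R/I$. This mirrors the deduction of \autoref{thm:rev} from \autoref{lem:rev}, but here no error term appears.

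First, fix $n$ and let $H = Haar_{n,n+u}(R)$ and $\bar H = H \bmod I$. Since the additive Haar measure on $R$ pushes forward to the Haar measure on $R/I$, the matrix $\bar H$ is distributed as $Haar_{n,n+u}(R/I)$. Moreover, conditionally on $\bar H$, each entry of $H$ is an independent Haar random element of its $I$-coset. This is the content of \autoref{rem:adj}, and it follows from disintegrating the Haar measure on $R$ along the cosets of $I$. Hence the conditional law of $H$ given $\bar H$ is exactly the explicit lifting operator applied to the matrix $\bar H$.

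The key step is to pass from conditioning on the matrix $\bar H$ to conditioning on its class $\Coker(\bar H)$. By \autoref{lem:inductionop}, the law of $\Coker$ of the lift depends only on $\Coker(\bar H)$, not on $\bar H$ itself. Therefore
\[
\probP\big(\Coker(H) = B,\ \Coker(\bar H) = A\big) = \probP\big(\Coker(\bar H) = A\big)\, \probP\big(A \xrightarrow{\text{ind}} B\big).
\]
On the other hand, $\Coker(\bar H) = red_I(\Coker(H))$ is a deterministic function of $\Coker(H)$, since $red_I$ is well defined on classes. So the same joint probability also equals $\probP(\Coker(H) = B)\,\mathbf{1}[red_I(B) = A]$. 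Equating the two expressions gives the finite-$n$ identity, with the $n$-dependent laws of $\Coker(Haar_{n,n+u}(R/I))$ and $\Coker(Haar_{n,n+u}(R))$ in place of the stationary measures.

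Finally, let $n \to \infty$. By \autoref{thm:recur} and the corollary following it, the laws of $\Coker(Haar_{n,n+u}(R))$ converge to $\mu_\uparameter(R)$. The same holds for $R/I$, which is again a complete local ring with the same finite residue field, so the results of \S\ref{sec:recurrence} apply to it; its laws converge to $\mu_\uparameter(R/I)$. Since the transition probabilities do not depend on $n$, the limit gives the adjointness relation above, and hence ind $= ind_I$.

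I expect the main obstacle to be the conditioning step in the third paragraph. It relies entirely on the well-definedness of the explicit operator on classes, so it should be checked carefully that \autoref{lem:inductionop} gives exactly that. One should also note that $ind_I$ is only determined on the support of $\mu_\uparameter(R/I)$, which is where the identity is meaningful. If needed, \autoref{thm:opcommutativity} gives an alternative route, by showing that the explicit operator preserves the relevant stationarity structure.
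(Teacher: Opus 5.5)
Your argument is correct. Its basic input is the same as the paper's: \autoref{rem:adj}, i.e.\ reducing $Haar_{n,n+u}(R)$ modulo $I$ gives $Haar_{n,n+u}(R/I)$, and the entrywise Haar lift of the latter is again $Haar_{n,n+u}(R)$, followed by passage to the stationary limit. The organization differs, however.

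The paper calls the explicit lift $\gamma_I$. It writes the input as the operator identities $\Delta_u^N O_u(R)=\gamma_I\Delta_u^N O_u(R/I)$ and $\Delta_u^N O_u(R/I)=red_I\Delta_u^N O_u(R)$, obtained from \autoref{thm:opcommutativity} and $O_u(R)=\gamma_I O_u(R/I)$. It then lets $N\to\infty$ to get $\gamma_I\mu_u(R/I)=\mu_u(R)$ and $red_I\mu_u(R)=\mu_u(R/I)$, and concludes adjointness. These two push-forward identities do not by themselves pin down the adjoint. The step tacitly uses that $\gamma_I(A)$ is supported on the fiber $red_I^{-1}(A)$, which gives $\mu_u(R)(B)=\mu_u(R/I)(A)\,\probP(A\xrightarrow{\gamma_I}B)$ whenever $red_I(B)=A$.

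You work instead with the joint law of $(\Coker(H),\Coker(\bar H))$. The fiber property then enters explicitly through the indicator $\mathbf{1}[red_I(B)=A]$. You obtain the detailed-balance identity already at every finite $n$, relative to the time-$n$ laws, before taking the limit, and you never need \autoref{thm:opcommutativity}, only well-definedness of the lift and of $red_I$ on classes. The paper's version is shorter and stays in the operator language it reuses in the following corollary. Yours is more self-contained and proves a slightly stronger finite-$n$ statement.

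Your caveat that $ind_I$ is only determined on the support of $\mu_u(R/I)$ is correct; the paper leaves it implicit.
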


\begin{proof}
Denote the operator defined in  \autoref{lem:inductionop} as $\gamma_I$. First of all, it easy to see that $O_u(R)=\gamma_I O_u(R/I)$ and $O_u(R/I)=red_I O_u(R)$. By \autoref{thm:opcommutativity}, $\Delta_u^N O_u(R)=\gamma_I \Delta_u^N O_u(R/I)$ and $\Delta_u^N O_u(R/I)=red_I \Delta_u^N O_u(R)$.\footnote{The preceding two identities are also a restatement of \autoref{rem:adj}.} Now taking the limit $N\rightarrow \infty$ gives:  $\mu_u(R)=\gamma_I \mu_u(R/I)$ and $\mu_u(R/I)=red_I \mu_u(R)$. Hence, $\gamma_I$ is the adjoint of reduction with respect to the measures $\mu_u(R)$ and $\mu_u(R/I)$ 
\end{proof}

\begin{corollary}(Of \autoref{thm:opcommutativity})
The image of $ind_{I}$ is a subspace of the space of signed measures on 
$C_{ \, \boldsymbol{\cdot} \, }(R)$
that is invariant under the action of the operators 
$d_{ \, \boldsymbol{\cdot} \, , \, \boldsymbol{\cdot} \, }$;
the kernel of $red_{I}$ is also a subspace of the space of signed measures on $C_{ \, \boldsymbol{\cdot} \, } (R)$
that is invariant under the action of the operators 
$d_{ \, \boldsymbol{\cdot} \, , \, \boldsymbol{\cdot} \, }$.
\end{corollary}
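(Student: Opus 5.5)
The plan is to deduce the statement formally from \autoref{thm:opcommutativity}: take the defining equation of the adjoint, and apply the commutation $red_I d_{\cdot,\cdot} = d_{\cdot,\cdot} red_I$ (and likewise for $ind_I$) to it. I will regard the collection of all the $\CC_u(R)$, $u\in\Z$, as a single state space, and the family of operators $d_{r,u}$ as acting on signed measures on it, using push-forward on measures. Two claims then need checking. First, if $\nu$ lies in the image of $ind_I$, say $\nu = ind_I(\lambda)$ with $\lambda$ a signed measure on $\CC_u(R/I)$, then $d_{r,u}\nu = d_{r,u}\, ind_I \lambda = ind_I\, d_{r,u}\lambda$ by \autoref{thm:opcommutativity}. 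This again lies in the image of $ind_I$ (now on $\CC_r$). Second, if $red_I \nu = 0$, then $red_I d_{r,u}\nu = d_{r,u}\, red_I\nu = 0$, so $d_{r,u}\nu \in \ker red_I$.

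The key step is justifying that the commutation in \autoref{thm:opcommutativity}, stated for random operators (that is, on probability measures), extends to signed measures. Both sides are linear operators on measures, since a random operator is by definition linear. The identity therefore extends from probability measures to all finite signed measures by linearity, writing a signed measure as a difference of two positive measures and then rescaling each. To use \autoref{thm:opcommutativity} for $ind_I$ at all, I will invoke the preceding corollary: it identifies $ind_I$ with the explicit operator $\gamma_I$ of \autoref{lem:inductionop}, and \autoref{thm:opcommutativity} is stated for $\gamma_I$.

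The main point needing care is the phrase ``invariant under the operators $d_{\cdot,\cdot}$''. Each $d_{r,u}$ changes the index $u$, so the subspaces have to be understood as graded: $\bigoplus_u ind_I(\mathcal{M}(\CC_u(R/I)))$ and $\bigoplus_u \ker(red_I|_{\CC_u(R)})$. With that reading, the two one-line computations above are the whole argument, and no further obstacle is expected. I would also remark that, in the $L^2(\mu_u)$ picture, the two subspaces are orthogonal complements, by the adjointness of $ind_I$ and $red_I$. Invariance of one then also follows from invariance of the other together with part (C) of \autoref{thm:randomoperators}. This gives a consistency check, but it is not needed for the proof.
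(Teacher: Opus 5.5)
Your proposal is correct and is exactly the deduction the paper intends. The paper gives no separate proof and simply labels the statement a corollary of \autoref{thm:opcommutativity}. Your two one-line computations, $d_{r,u}\, ind_I = ind_I\, d_{r,u}$ and $red_I\, d_{r,u} = d_{r,u}\, red_I$, are that deduction. You rightly use the preceding corollary's identification $\gamma_I = ind_I$, and you pass to signed measures by linearity.
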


\begin{mysection}
\paragraph{Universality}
\begin{remark}
In a forthcoming paper, we will show a universality theorem for the
above operators, assuming $R$ is finite. The universality result will
allow us to prove the universality of cokernels of random matrices
with indepedent identically distributed random entries, satisfying
certain non-degeneracy conditions.
\end{remark}
\end{mysection}

\bibliographystyle{alpha}
\bibliography{ThesisBibliography}

\end{document}